\newtheorem{theorem}{Theorem}[section]
\newtheorem{lemma}[theorem]{Lemma}
\newtheorem{conjecture}[theorem]{Conjecture}
\newtheorem{corollary}[theorem]{Corollary}
\newtheorem{proposition}[theorem]{Proposition}
\theoremstyle{definition}
\newtheorem{note}[theorem]{Note}
\theoremstyle{remark}
\begin{document}

\title[Unimodal polynomials and quartic integrals]
{The unimodality of a polynomial coming from a rational integral. Back to the 
original proof}

\author[T. Amdeberhan et al]{Tewodros Amdeberhan, Atul Dixit, Xiao Guan, Lin Jiu, Victor H. Moll}
\address{Department of Mathematics,
Tulane University, New Orleans, LA 70118}
\email{tamdeber@tulane.edu,adixit@tulane.edu,xguan1@tulane.edu,ljiu@tulane.edu
\\vhm@tulane.edu}

%
%
%
%
\subjclass[2010]{Primary 33C05}

\date{\today}

\keywords{Hypergeometric function, unimodal polynomials, monotonicity}

\begin{abstract}
A sequence of coefficients that appeared in the evaluation of a rational 
integral has been shown to be unimodal. An alternative proof 
is presented. 
\end{abstract}

\maketitle

\newcommand{\ba}{\begin{eqnarray}}
\newcommand{\ea}{\end{eqnarray}}
\newcommand{\ift}{\int_{0}^{\infty}}
\newcommand{\nn}{\nonumber}
\newcommand{\no}{\noindent}
\newcommand{\lf}{\left\lfloor}
\newcommand{\rf}{\right\rfloor}
\newcommand{\realpart}{\mathop{\rm Re}\nolimits}
\newcommand{\imagpart}{\mathop{\rm Im}\nolimits}

\newcommand{\op}[1]{\ensuremath{\operatorname{#1}}}
\newcommand{\pFq}[5]{\ensuremath{{}_{#1}F_{#2} \left( \genfrac{}{}{0pt}{}{#3}
{#4} \bigg| {#5} \right)}}

\newtheorem{Definition}{\bf Definition}[section]
\newtheorem{Thm}[Definition]{\bf Theorem}
\newtheorem{Example}[Definition]{\bf Example}
\newtheorem{Lem}[Definition]{\bf Lemma}
\newtheorem{Cor}[Definition]{\bf Corollary}
\newtheorem{Prop}[Definition]{\bf Proposition}
\numberwithin{equation}{section}

\section{Introduction}
\label{sec-intro}

The polynomial 
\begin{equation}
P_{m}(a) = \sum_{\ell=0}^{m} d_{\ell}(m)a^{\ell}
\label{poly-def}
\end{equation}
\noindent
with 
\begin{equation}
d_{\ell}(m) = 2^{-2m} \sum_{k= \ell}^{m} 2^{k} \binom{2m-2k}{m-k} 
\binom{m+k}{m} \binom{k}{\ell}
\end{equation}
\noindent
made its appearance in \cite{boros-1999c} in 
the evaluation of the quartic integral 
\begin{equation}
\int_{0}^{\infty} \frac{dx}{(x^{4}+2ax^{2}+1)^{m+1}} = 
\frac{\pi}{2^{m+3/2} (a+1)^{m+1/2}} P_{m}(a).
\end{equation}
\noindent
Properties of the sequence of numbers  $\{ d_{\ell}(m) \}$ 
are discussed in \cite{manna-2009a}. Among them 
is the fact that this is a unimodal sequence. Recall that a sequence of real 
numbers $\{ x_{0}, \, x_{1}, \, \cdots, x_{m} \}$ is called \textit{unimodal}
if there exists an index $0 \leq j \leq m$ such that $x_{0} \leq x_{1} 
\leq \cdots \leq x_{j}$ and $x_{j} \geq x_{j+1} \geq \cdots x_{m}$. The 
sequence is called \textit{logconcave} if $x_{j}^{2} \geq x_{j-1}x_{j+1}$
for $1 \leq j \leq m-1$. It is easy to see that if a sequence is logconcave 
then it is unimodal \cite{wilf-1990a}. 

The sequence $\{ d_{\ell}(m) \}$ was shown to be unimodal in \cite{boros-1999b}
by an elementary argument and it was conjectured there to be logconcave. This 
conjecture was established by M. Kauers and P. Paule \cite{kauers-2007a} 
using four recurrence relations found using a computer algebra approach. 
W. Y. Chen and E. X. W. Xia \cite{chen-2009c} introduced the notion of 
\textit{ratio-monotonicity} for a sequence $\{ x_{m} \}$:
\begin{equation}
\frac{x_{0}}{x_{m-1}} \leq \frac{x_{1}}{x_{m-2}} \leq \cdots \leq 
\frac{x_{i}}{x_{m-1-i}} \leq \cdots \leq 
\frac{x_{\lf \tfrac{m}{2} \rf -1}}{x_{m- \lf \tfrac{m}{2} \rf}} \leq 1.
\end{equation}
\noindent
The results in \cite{chen-2009c} show that $\{ d_{\ell}(m) \}$ is a 
ratio-monotone sequence and, as can be 
easily checked, this implies the logconcavity
of $\{ d_{\ell}(m) \}$. The logconcavity of $\{ d_{\ell}(m) \}$ also follows
from the \textit{minimum conjecture} stated in \cite{moll-2007e}: let 
$b_{\ell}(m) = 2^{2m} d_{\ell}(m)$. The function 
\begin{equation*}
(m+ \ell)(m+1-\ell)b_{\ell-1}^{2}(m) + \ell (\ell + 1) b_{\ell}^{2}(m) - 
\ell (2m+1)b_{\ell-1}(m),
\end{equation*}
\noindent
defined for $1 \leq \ell \leq m$, attains its minimum at $\ell = m$ with 
value $2^{2m} m(m+1) \binom{2m}{m}^{2}$. This has been proven in 
\cite{chen-2013a}, providing an alternative proof of the logconcavity of 
$\{ d_{\ell}(m) \}$. 

\smallskip

Further study of the sequence $\{d_{\ell}(m) \}$ are defined in terms of the 
operator 
\begin{equation}
\mathfrak{L} \left( \{ x_{k} \} \right) = \left\{  x_{k}^{2} - x_{k-1}x_{k+1}
\right\}.
\end{equation}
\noindent
For instance, $\{ x_{k} \}$ is logconcave simply means 
$\mathfrak{L} \left( \{ x_{k} \} \right)$ is a nonnegative sequence. The 
sequence is called $i$-logconcave if 
$\mathfrak{L}^{j} \left( \{ x_{k} \} \right)$ is a nonnegative sequence for 
$0 \leq j \leq i$. A sequence that is $i$-logconcave for every
 $i \in \mathbb{N}$ is called \textit{infinitely logconcave}. 

\begin{conjecture}
The sequence $\{ d_{\ell}(m) \}$ is infinitely logconcave.
\end{conjecture}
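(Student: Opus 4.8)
The plan is to reduce the conjecture to the theorem of P.\ Br\"and\'en resolving the $r$-factor conjecture of McNamara and Sagan. Set $r=\tfrac{3+\sqrt5}{2}\approx 2.618$ and call a positive sequence $\{x_k\}$ \emph{$r$-logconcave} if $x_k^2\ge r\,x_{k-1}x_{k+1}$ at every interior index. Br\"and\'en's theorem is that $\mathfrak{L}$ maps positive $r$-logconcave sequences to positive $r$-logconcave sequences; in particular every such sequence is infinitely logconcave. It would therefore suffice to produce one index $N$ for which $\mathfrak{L}^{N}(\{d_\ell(m)\})$ is $r$-logconcave: the finitely many conditions $\mathfrak{L}^{j}(\{d_\ell(m)\})\ge 0$ for $j\le N$ (that is, $N$-logconcavity, accessible through the explicit recurrences) would be checked directly, while Br\"and\'en's theorem would cover all levels $j>N$ at once.

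The difficulty visible already in the data is that $\{d_\ell(m)\}$ is only weakly logconcave near its unimodal peak: the ratio $d_\ell(m)^2/(d_{\ell-1}(m)d_{\ell+1}(m))$ is smallest in the bulk and tends to $1$ as $m\to\infty$ (its minimum is $\approx 2.15$ at $m=4$ and $\approx 1.73$ at $m=6$), so the sequence is far from $r$-logconcave and $N=0$ fails. A single application of $\mathfrak{L}$ lifts these ratios substantially, and one checks that $\mathfrak{L}(\{d_\ell(m)\})$ is $r$-logconcave for $m\le 6$; but its minimal ratio again decreases with $m$ (it is only $\approx 2.64$ at $m=6$, attained near the peak) and appears to fall below $r$ for larger $m$. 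Thus the hard region is the bulk, precisely where the logconcavity ratios sit just above $1$, and no fixed $N$ is obviously enough for all $m$.

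The other branch of Br\"and\'en's work, preservation of real-rootedness, is exactly what establishes infinite logconcavity of the binomial rows, which are the model for this near-$1$ behavior; unfortunately $P_m(a)$ is not real-rooted (already $P_2(a)=\tfrac{21}{8}+\tfrac{15}{4}a+\tfrac32 a^2$ has complex zeros), so that route is closed. I would therefore pursue the iteration route quantitatively: expand the Tur\'an entries $y_\ell=d_\ell^2-d_{\ell-1}d_{\ell+1}$, reduce each inequality $y_\ell^2\ge r\,y_{\ell-1}y_{\ell+1}$ to a polynomial inequality of degree four in the $d_\ell(m)$, and bound it using the recurrences of Kauers and Paule \cite{kauers-2007a}, the ratio-monotone estimates of Chen and Xia \cite{chen-2009c}, and the minimum inequality of \cite{chen-2013a}, all of which pin the ratio $d_{\ell-1}(m)d_{\ell+1}(m)/d_\ell(m)^2$ tightly in the bulk.

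The main obstacle is making this uniform in $m$. Since the bulk ratios approach $1$ under the first iterations, one must either show that for each fixed $m$ some finite iterate enters the $r$-logconcave regime, allowing $N(m)$ to grow, or else track the whole $\mathfrak{L}$-orbit in the bulk with enough precision to exclude any dip below $1$ at any level. Both demands require sharp two-sided asymptotics for $d_\ell(m)$ near the peak together with a quantitative description of how $\mathfrak{L}$ moves ratios that begin just above $1$ --- the same obstruction that makes infinite logconcavity delicate for binomial coefficients, now stripped of the real-rootedness that rescued that case. This is where I expect the real work to lie, and it is the reason the conjecture remains open.
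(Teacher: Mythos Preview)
The statement you address is listed in the paper as a \emph{conjecture}, not a theorem; the paper offers no proof and explicitly notes that Br\"and\'en's real-rootedness route is unavailable since $P_m(a)$ has non-real zeros, while the Chen et al.\ results on $Q_m$ and $R_m$ yield only $3$-logconcavity. There is therefore no proof in the paper to compare your attempt against.

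Your writeup is not a proof either, and does not pretend to be one: you outline the $r$-factor strategy with $r=(3+\sqrt5)/2$, give numerical evidence that no fixed iterate $\mathfrak{L}^N(\{d_\ell(m)\})$ is $r$-logconcave uniformly in $m$, correctly rule out real-rootedness, sketch how the Kauers--Paule recurrences and Chen--Xia estimates might be brought to bear, and then close by stating that the obstacle you describe ``is the reason the conjecture remains open.'' This is a reasonable survey of the available tools and their limitations, entirely consistent with what the paper itself says about the problem, but as a proof it has the obvious gap that nothing is actually established---and you say so in your last sentence. A minor point of attribution: the $r$-factor invariance (that $\mathfrak{L}$ preserves $r$-logconcavity for $r\ge(3+\sqrt5)/2$) is due to McNamara and Sagan; Br\"and\'en's contribution in this circle is the preservation of real-rootedness under $\mathfrak{L}$, which you correctly cite separately.
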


There is a strong connection between the roots of a polynomial $P(x)$ and 
ordering properties of its coefficients. For instance, if $P(x)$ has 
only real negative zeros, then $P$ is logconcave (see \cite{wilf-1990a} for 
details). Therefore, the expansion of $(x+1)^{n}$ shows that the binomial
coefficients form a logconcave sequence. P. Br\"{a}nd\'{e}n 
\cite{branden-2011a} showed that if 
$P(x) = a_{0} + a_{1}x + \cdots + a_{n}x^{n},$ with $a_{j} \geq 0$ has only 
real roots, then the same is true for 
\begin{equation}
P_{1}(x) = a_{0}^{2} + (a_{1}^{2}-a_{0}a_{2})x + \cdots + 
(a_{n-1}^{2}-a_{n-2}a_{n})x^{n}.
\end{equation}
\noindent 
This implies that the binomial coefficients are infinitely logconcave. This
approach fails with the sequence $\{ d_{\ell}(m) \}$ since the polynomial 
$P_{m}(a)$ has mostly non-real zeros. On the other hand, Br\"{a}nd\'{e}n
conjectured and W. Y. C. Chen et al \cite{chen-2013b} proved that 
$\begin{displaystyle}
Q_{m}(x) = \sum_{\ell=0}^{m} \frac{d_{\ell}(m)}{\ell!} x^{\ell}
\end{displaystyle}$
and 
$\begin{displaystyle}
R_{m}(x) = \sum_{\ell=0}^{m} \frac{d_{\ell}(m)}{(\ell+2)!} x^{\ell}
\end{displaystyle}$
have only real zeros. These results imply that $P_{m}(a)$ in \eqref{poly-def}
is $3$-logconcave. 

The goal of this paper is to present an improved version of
the original proof of the theorem 

\begin{theorem}
\label{thm-uni}
The sequence $\{ d_{\ell}(m) \}$ is unimodal.
\end{theorem}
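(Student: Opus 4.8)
The plan is to recast the coefficients in a basis where their positivity is transparent and then to track a single statistic. First I would rewrite $P_m$ in powers of $1+a$: interchanging the order of summation in $\sum_{\ell} d_\ell(m) a^\ell$ gives
\[
P_m(a) = \sum_{k=0}^{m} \tilde{c}_k (1+a)^k, \qquad \tilde{c}_k = 2^{k-2m}\binom{2m-2k}{m-k}\binom{m+k}{m} > 0,
\]
so that $d_\ell(m) = \sum_{k=\ell}^{m} \tilde{c}_k \binom{k}{\ell}$. The advantage is twofold: the summands are manifestly nonnegative, and the coefficients $\tilde{c}_k$ are explicit. A short computation of the ratio $\tilde{c}_{k+1}/\tilde{c}_k = (m-k)(m+k+1)/[(2m-2k-1)(k+1)]$ shows it exceeds $1$ precisely because $(m-k-1)^2+m>0$, so $\{\tilde{c}_k\}$ is strictly increasing. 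To obtain unimodality it then suffices to prove the one-sided implication that, once the sequence stops increasing, it never increases again:
\[
d_{\ell+1}(m) \le d_\ell(m) \implies d_{\ell+2}(m) \le d_{\ell+1}(m).
\]
This forces the descending steps to occupy a terminal segment of the index range, which is exactly unimodality (plateaus included).

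To analyze this implication I would read the quotient $\binom{k}{\ell+1}/\binom{k}{\ell}$ probabilistically. Put $w_k^{(\ell)} \propto \tilde{c}_k\binom{k}{\ell}$, a probability weight on $\ell \le k \le m$, and let $\langle k\rangle_\ell$ denote the corresponding mean of $k$. Since $\binom{k}{\ell+1} = \binom{k}{\ell}(k-\ell)/(\ell+1)$, one gets $d_{\ell+1}/d_\ell = (\langle k\rangle_\ell - \ell)/(\ell+1)$, so the inequality $d_{\ell+1}\le d_\ell$ is equivalent to $\langle k\rangle_\ell \le 2\ell+1$. Reweighting by $(k-\ell)$ to pass from level $\ell$ to level $\ell+1$ yields the exact identity
\[
\langle k\rangle_{\ell+1} - \langle k\rangle_\ell = \frac{\operatorname{Var}_\ell(k)}{\langle k\rangle_\ell - \ell},
\]
where $\operatorname{Var}_\ell$ is the variance under $w^{(\ell)}$. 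Hence, if one can show the increment of the mean is at most $2$, then from $\langle k\rangle_\ell \le 2\ell+1$ we deduce $\langle k\rangle_{\ell+1} \le 2(\ell+1)+1$, which is precisely the implication above. Everything therefore reduces to the single variance bound $\operatorname{Var}_\ell(k) \le 2(\langle k\rangle_\ell - \ell)$; writing the moments in terms of the coefficients via $\sum_k \tilde{c}_k\binom{k}{\ell}(k-\ell) = (\ell+1)d_{\ell+1}$ and its second-order analogue turns this into the equivalent algebraic inequality $(\ell+2) d_\ell d_{\ell+2} \le (\ell+1) d_{\ell+1}^2 + d_\ell d_{\ell+1}$.

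The variance bound is the crux and is where the real work lies. Monotonicity of $\{\tilde{c}_k\}$ alone is not enough, for with merely nondecreasing weights the ratio $\operatorname{Var}_\ell(k)/(\langle k\rangle_\ell-\ell)$ can be large, as a near-uniform distribution shows; so the argument must exploit how fast $\tilde{c}_k$ grows. Here the rapid, log-concave-type growth recorded in $\tilde{c}_{k+1}/\tilde{c}_k$ concentrates the weight $w^{(\ell)}$ near the top index $k=m$, forcing the variance to be small relative to the mean. I would establish the bound either by estimating $\operatorname{Var}_\ell(k)$ directly from the explicit ratios of the weights, or by proving the algebraic form $(\ell+2) d_\ell d_{\ell+2} \le (\ell+1) d_{\ell+1}^2 + d_\ell d_{\ell+1}$ through a double-sum manipulation of $d_\ell = \sum_k \tilde{c}_k\binom{k}{\ell}$ that exhibits the difference as a sum of nonnegative terms. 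I expect this last step, controlling the second moment uniformly in $\ell$, to be the main obstacle, since it is exactly the point where elementary monotonicity must be upgraded using the precise arithmetic of the coefficients.
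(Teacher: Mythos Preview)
Your reduction is clean and the computations through the variance identity are correct: the expansion $P_m(a)=\sum_k \tilde c_k(1+a)^k$, the monotonicity of $\tilde c_k$, the probabilistic reading $d_{\ell+1}/d_\ell=(\langle k\rangle_\ell-\ell)/(\ell+1)$, and the formula $\langle k\rangle_{\ell+1}-\langle k\rangle_\ell=\operatorname{Var}_\ell(k)/(\langle k\rangle_\ell-\ell)$ all check out. The gap is precisely where you locate it, but it is a real gap rather than a detail: you have not proved the variance bound $\operatorname{Var}_\ell(k)\le 2(\langle k\rangle_\ell-\ell)$, nor its algebraic equivalent
\[
(\ell+2)\,d_\ell d_{\ell+2}\ \le\ (\ell+1)\,d_{\ell+1}^2+d_\ell d_{\ell+1},
\]
and this three-term second-order relation is of the same strength as log-concavity. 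Indeed, in the increasing regime $d_{\ell+1}>d_\ell$ one has $d_\ell d_{\ell+1}<d_{\ell+1}^2$, so your inequality \emph{implies} $d_\ell d_{\ell+2}<d_{\ell+1}^2$ there. Log-concavity of $\{d_\ell(m)\}$ is known to be substantially harder than unimodality (it required the computer-generated recurrences of Kauers--Paule or the ratio-monotonicity machinery of Chen--Xia), so you have traded the original problem for one at least as hard. Neither of your suggested routes---direct moment estimates from the explicit weight ratios, or a positivity-revealing double sum---comes with any indication of how it would avoid this difficulty, and the monotonicity of $\{\tilde c_k\}$ that you verified, while true, plays no role in the argument beyond motivation.

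By contrast, the paper never touches second moments. It writes $\Delta d_\ell=d_{\ell+1}-d_\ell$ explicitly as a signed sum with the factor $(k-2\ell-1)$, observes that every term is nonpositive once $\ell\ge\lfloor m/2\rfloor$, and for $\ell<\lfloor m/2\rfloor$ bounds the total negative contribution by the single term $k=m$ on the positive side. This last step reduces to the scalar inequality $T(m)<1$ for an explicit one-index sum, which falls to an elementary term-by-term comparison $\binom{2r}{r}\binom{m+1}{r}\le\binom{4m}{r}$. The paper's argument thus stays entirely at the first-moment level; the price it pays is having to identify the peak location $\lfloor m/2\rfloor$ in advance, whereas your implication-based strategy would not need that information---if the variance bound could be supplied.
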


The proof of Theorem \ref{thm-uni} given in \cite{boros-1999b} is 
based on the difference 
\begin{equation}
\Delta d_{\ell}(m) = d_{\ell+1}(m) - d_{\ell}(m).
\end{equation}
\noindent
A simple calculation shows that 
\begin{equation}
\Delta d_{\ell}(m) = \frac{1}{2^{2m}} \binom{m+ \ell}{m} 
\sum_{k=\ell}^{m} 2^{k} \binom{2m-2k}{m-k} \binom{m+k}{m+ \ell} 
\times \frac{k - 2 \ell -1}{\ell+1}.
\end{equation}
\noindent
For $\lf \frac{m}{2} \rf \leq \ell \leq m- 1$, the inequality 
\begin{equation}
k - 2 \ell -1 \leq k - 2 \lf \frac{m}{2} \rf - 1 \leq k - m \leq 0
\end{equation}
\noindent
shows that $\Delta d_{\ell}(m) < 0$ since the term for $k = \ell$ has 
a strictly negative contribution.  In the 
range $0 \leq \ell < \lf \frac{m}{2} \rf$, the difference 
$\Delta d_{\ell}(m) > 0$. This is equivalent to
\begin{equation}
\label{task-1}
\sum_{k= \ell}^{2 \ell} 2^{k} (2 \ell + 1 - k) \binom{2m-2k}{m-k} 
\binom{m+k}{m+ \ell} < 
\sum_{k= 2 \ell + 2}^{m} 2^{k} (k-2 \ell - 1) \binom{2m-2k}{m-k} 
\binom{m+k}{m+ \ell}.
\end{equation}

\smallskip

\noindent
\textbf{Fact 1}. The inequality \eqref{task-1} implies Theorem \ref{thm-uni}.

\smallskip

\noindent
This required inequality is valid in an even stronger form, obtained by 
replacing $k-2 \ell - 1$ on the right hand side of \eqref{task-1} by $1$ to 
produce 
\begin{equation}
\label{task-2}
\sum_{k= \ell}^{2 \ell} 2^{k} (2 \ell + 1 - k) \binom{2m-2k}{m-k} 
\binom{m+k}{m+ \ell} < 
\sum_{k= 2 \ell + 2}^{m} 2^{k} \binom{2m-2k}{m-k} 
\binom{m+k}{m+ \ell},
\end{equation}
\noindent 
and then made even stronger by replacing the sum on the right hand side 
of \eqref{task-2} by its last term. Therefore, if 
\begin{equation}
\label{task-3}
\sum_{k= \ell}^{2 \ell} 2^{k} (2 \ell + 1 - k) \binom{2m-2k}{m-k} 
\binom{m+k}{m+ \ell} < 
2^{m} \binom{2m}{m+ \ell},
\end{equation}
then $\Delta d_{\ell}(m) > 0$.  This last inequality is now 
written as 
\begin{equation}
S_{m, \ell} := 
\sum_{k= \ell}^{2 \ell} \binom{m - \ell}{m - k } 
\binom{m+k}{2k} \binom{2m}{2k}^{-1} \times 
\frac{2 \ell + 1 - k}{2^{m-k}} < 1.
\label{task-4}
\end{equation}

\smallskip

\noindent
\textbf{Fact 2}. The inequality \eqref{task-4} implies Theorem \ref{thm-uni}.

\smallskip

In \cite{boros-1999b}, the proof of \eqref{task-4} is divided into 
two parts: first 

\begin{theorem}
\label{missing-1}
For fixed $m \in \mathbb{N}$ and $0 \leq \ell < \lf \frac{m}{2} \rf$, the 
sum $S_{m, \ell}$ is increasing in $\ell$. 
\end{theorem}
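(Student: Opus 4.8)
The plan is to prove the monotonicity $S_{m,\ell+1} > S_{m,\ell}$ by controlling the difference $D_\ell := S_{m,\ell+1} - S_{m,\ell}$ directly. First I would record the summand and its ratio in $k$: writing
\begin{equation}
t_{m,\ell}(k) := \binom{m-\ell}{m-k}\binom{m+k}{2k}\binom{2m}{2k}^{-1}\frac{2\ell+1-k}{2^{m-k}},
\end{equation}
so that $S_{m,\ell} = \sum_{k=\ell}^{2\ell} t_{m,\ell}(k)$, a routine simplification gives
\begin{equation}
\frac{t_{m,\ell}(k+1)}{t_{m,\ell}(k)} = \frac{(m-k)(m+k+1)(2\ell-k)}{(k+1-\ell)(2m-2k-1)(2\ell+1-k)},
\end{equation}
which exhibits $S_{m,\ell}$ as a hypergeometric sum. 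A useful first observation is that the factor $2\ell+1-k$ kills the $k=2\ell+1$ term, so $S_{m,\ell}$ may be summed over the uniform range $\ell \le k \le 2\ell+1$; under the shift $\ell\mapsto\ell+1$ the two ranges then overlap in $\ell+1 \le k \le 2\ell+1$ with only controlled boundary terms left over.

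The engine I would use to convert this into a sign-definite quantity is the Beta-integral identity $\binom{2m}{2k}^{-1} = (2m+1)\int_0^1 x^{2k}(1-x)^{2m-2k}\,dx$. Substituting it moves the awkward reciprocal binomial outside and leaves, under the integral sign, a finite sum of the manifestly summable pieces $\binom{m-\ell}{m-k}\binom{m+k}{2k}\,2^{k}x^{2k}(1-x)^{2m-2k}$ weighted by $2\ell+1-k$. Writing $D_\ell = (2m+1)\int_0^1 f_\ell(x)\,dx$, it then suffices to show that the polynomial $f_\ell$ integrates to a positive quantity over $[0,1]$; the hope is that $f_\ell$ is in fact pointwise nonnegative there, which would close the argument at once.

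To evaluate the inner sum I would first remove the linear weight by the elementary identity $2\ell+1-k = \#\{\,i : k \le i \le 2\ell\,\}$, which rewrites $\sum_k (2\ell+1-k)a_k$ as the sum of partial sums $\sum_{i=\ell}^{2\ell}\sum_{k=\ell}^{i} a_k$. This trades the linear factor for an extra summation that is better behaved under Chu--Vandermonde-type convolution of $\binom{m-\ell}{m-k}$ against $\binom{m+k}{2k}$, and should collapse $f_\ell(x)$ to a short, sign-transparent form, after which the comparison of $\ell$ and $\ell+1$ reduces to comparing two such forms.

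\textbf{Main obstacle.} The crux is the closed-form evaluation of the inner sum. The two binomials do not form a clean Vandermonde convolution, since their arguments differ ($m-k$ versus $2k$) and are further tangled with the power $2^{k}$ and the linear weight, so the summation will not be immediate. Moreover, a purely termwise comparison cannot work: in the overlapping range the ratio $t_{m,\ell+1}(k)/t_{m,\ell}(k) = \frac{(k-\ell)(2\ell+3-k)}{(m-\ell)(2\ell+1-k)}$ is \emph{not} uniformly at least $1$, exceeding or falling below $1$ depending on $k$ and on how close $\ell$ is to $\lfloor m/2\rfloor$. The positivity must therefore be extracted from the aggregate, either through the integral representation above or, as a fallback, by producing a creative-telescoping (Zeilberger) certificate that yields a first-order recurrence for $S_{m,\ell}$ in $\ell$ whose coefficients make the increment visibly positive throughout $0 \le \ell < \lfloor m/2\rfloor$.
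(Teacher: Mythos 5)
Your algebra is sound --- the term ratio in $k$, the shifted-index ratio $t_{m,\ell+1}(k)/t_{m,\ell}(k) = \frac{(k-\ell)(2\ell+3-k)}{(m-\ell)(2\ell+1-k)}$, and the Beta-integral identity $\binom{2m}{2k}^{-1} = (2m+1)\int_0^1 x^{2k}(1-x)^{2m-2k}\,dx$ are all correct --- but what you have written is a strategy, not a proof. The entire burden of the theorem is concentrated in the step you yourself label the ``main obstacle'': you never show that $\int_0^1 f_\ell(x)\,dx > 0$ (let alone that $f_\ell$ is pointwise nonnegative), and you concede that the inner sum does not collapse under Chu--Vandermonde because $\binom{m-\ell}{m-k}$ and $\binom{m+k}{2k}$ are not in convolution position and are further entangled with the factor $2^k$ and the linear weight. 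Since, as you correctly note, termwise comparison also fails (the ratio above oscillates about $1$ across the overlap range), no part of the proposal actually establishes $S_{m,\ell+1} > S_{m,\ell}$; the argument stops exactly where the difficulty begins.

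For context: this paper does not reprove Theorem \ref{missing-1} at all. It defers to the long elementary estimates of \cite{boros-1999b} and supplies new proofs only of Theorem \ref{missing-2}, i.e.\ the bound $T(m)<1$. More to the point, your declared fallback --- a creative-telescoping (Zeilberger) certificate yielding a first-order recurrence in $\ell$ with sign-transparent coefficients --- is precisely what the authors of this paper report having attempted and failed: the methods of \cite{petkovsek-1996a} do produce recurrences for the summands in \eqref{task-4}, but those recurrences do not permit the conclusion that $S_{m,\ell}$ is increasing. So the fallback cannot be treated as a safety net. If the Beta-integral route is to succeed, the positivity of $\int_0^1 f_\ell(x)\,dx$ must be extracted by a genuinely new argument (for instance, a tractable closed or sign-definite form for $f_\ell$ after your partial-summation step), and that argument is currently missing.
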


\noindent
and then 

\begin{theorem}
\label{missing-2}
The maximal sum $S_{m, \lf \tfrac{m-1}{2} \rf}$ is strictly less than $1$. 
For $m$ even, the maximal sum $S_{2m,m-1}$ is given by 
\begin{equation}
T(m) := S_{2m,m-1} = \sum_{r=2}^{m+1} \binom{2r}{r} \binom{m+1}{r} 
\frac{(r-1)}{2^{r} \binom{4m}{r}}.
\end{equation}
\noindent
a similar expression exists for $m$ odd.
\end{theorem}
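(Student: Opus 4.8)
The plan is to treat the two assertions of Theorem \ref{missing-2} separately: first derive the closed form for the maximal sum, then bound it. I would establish the closed form for even degree $2m$ explicitly, the odd-degree case being entirely analogous. Substituting the degree parameter $2m$ and index $\ell=m-1$ into the definition \eqref{task-4} and making the change of summation variable $r=2m-k$, the range $m-1\le k\le 2m-2$ becomes $2\le r\le m+1$, and one finds $\binom{m+1}{2m-k}=\binom{m+1}{r}$, $2m-1-k=r-1$, $2^{2m-k}=2^{r}$, together with $\binom{2m+k}{2k}=\binom{4m-r}{r}$ by the symmetry of binomial coefficients and $\binom{4m}{2k}^{-1}=\binom{4m}{2r}^{-1}$. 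The derivation of $T(m)$ is then completed by the elementary factorial identity
\[
\binom{4m-r}{r}\binom{4m}{2r}^{-1}=\binom{2r}{r}\binom{4m}{r}^{-1},
\]
which I would verify by writing both sides as $\frac{(2r)!\,(4m-r)!}{r!\,(4m)!}$. This part is routine bookkeeping.

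The substance of the theorem is the inequality $T(m)<1$. My plan is to bound the summand $t_r:=\binom{2r}{r}\binom{m+1}{r}\frac{r-1}{2^{r}\binom{4m}{r}}$ by a term of a convergent series whose sum is an explicit constant below $1$. The decisive observation is that
\[
\frac{\binom{m+1}{r}}{\binom{4m}{r}}=\prod_{j=0}^{r-1}\frac{m+1-j}{4m-j},
\]
where each factor with $j\ge 2$ is at most $\tfrac14$ (since $\frac{m+1-j}{4m-j}\le\tfrac14\iff 4-3j\le 0$), while the two factors $j=0,1$ contribute $\frac{m+1}{4(4m-1)}$. This gives $\frac{\binom{m+1}{r}}{\binom{4m}{r}}\le\frac{4(m+1)}{4m-1}\,4^{-r}$ for every $r\ge 2$, and hence $t_r\le\frac{4(m+1)}{4m-1}\binom{2r}{r}(r-1)8^{-r}$.

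Summing this bound over $r\ge 2$ reduces the problem to evaluating $\sum_{r\ge 2}\binom{2r}{r}(r-1)x^{r}$ at $x=\tfrac18$. Starting from $\sum_{r\ge 0}\binom{2r}{r}x^{r}=(1-4x)^{-1/2}$ and differentiating gives $\sum_{r\ge 0}\binom{2r}{r}(r-1)x^{r}=(6x-1)(1-4x)^{-3/2}$, which at $x=\tfrac18$ equals $-\tfrac{1}{\sqrt2}$; subtracting the $r=0,1$ terms leaves $\sum_{r\ge 2}\binom{2r}{r}(r-1)8^{-r}=1-\tfrac{1}{\sqrt2}$. Therefore $T(m)\le\frac{4(m+1)}{4m-1}\big(1-\tfrac{1}{\sqrt2}\big)$, and since $\frac{4(m+1)}{4m-1}=1+\frac{5}{4m-1}$ is decreasing in $m$ with maximal value $\tfrac83$ at $m=1$, I obtain $T(m)\le\tfrac83\big(1-\tfrac{1}{\sqrt2}\big)\approx 0.78<1$ for all $m$. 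The odd-degree maximal sum is handled by the same change of variable and the same factorwise estimate.

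I expect the main obstacle to be the choice of the binomial-ratio estimate: one needs a bound on $\binom{m+1}{r}/\binom{4m}{r}$ that is simultaneously uniform in $m$ and summable against $\binom{2r}{r}$ in closed form. Because the factors $\frac{m+1-j}{4m-j}$ for $j=0,1$ exceed $\tfrac14$, one cannot simply replace the whole ratio by $4^{-r}$; the delicate step is to isolate these two factors, bound the remaining $r-2$ factors by $\tfrac14$ each, and absorb the excess of the first two into the prefactor $\frac{4(m+1)}{4m-1}$, which remains bounded by $\tfrac83$. Once the series $\sum_{r\ge 2}\binom{2r}{r}(r-1)8^{-r}$ is isolated its evaluation through $(1-4x)^{-1/2}$ is immediate, and the final comparison with $1$ has ample slack.
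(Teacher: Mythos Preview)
Your argument is correct, and it differs in a meaningful way from the paper's direct proof of $T(m)<1$ (Theorem~\ref{bound-tm}). The paper bounds the \emph{combined} ratio $\binom{2r}{r}\binom{m+1}{r}/\binom{4m}{r}\le 1$ via an inductive comparison of successive-term quotients, leaving the elementary tail $\sum_{r=2}^{m+1}(r-1)2^{-r}=1-(m+2)2^{-(m+1)}<1$. You instead isolate only $\binom{m+1}{r}/\binom{4m}{r}$, bound it factorwise by $\tfrac{4(m+1)}{4m-1}\,4^{-r}$, keep the central binomial coefficient explicit, and evaluate $\sum_{r\ge 2}\binom{2r}{r}(r-1)8^{-r}=1-\tfrac{1}{\sqrt2}$ from the generating function $(1-4x)^{-1/2}$. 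Both routes are short; the paper's is slightly more elementary (no generating function needed), while yours is asymptotically sharp: your bound $T(m)\le\bigl(1+\tfrac{5}{4m-1}\bigr)\bigl(1-\tfrac{1}{\sqrt2}\bigr)$ converges to $1-\tfrac{1}{\sqrt2}=\tfrac{2-\sqrt2}{2}$, which is exactly the limit established in Section~\ref{sec-limiting}, whereas the paper's bound tends to~$1$. Your derivation of the closed form for $T(m)$ via $r=2m-k$ and the identity $\binom{4m-r}{r}\binom{4m}{2r}^{-1}=\binom{2r}{r}\binom{4m}{r}^{-1}$ is also correct.
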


\noindent
\textbf{Fact 3}. Theorems \ref{missing-1} and \ref{missing-2} imply
Theorem \ref{thm-uni}.

\smallskip

These two results were established in \cite{boros-1999b} by some 
elementary estimates. These were long and do not extend to, for instance, the 
proof of logconcavity of $\{ d_{\ell}(m) \}$. The hope is that the techniques 
used to provide the new proof of unimodality presented here, will also 
apply to other situations. 

Section \ref{bound-T} presents a new 
elementary proof of Theorem 
\ref{missing-2} and Section \ref{miss-2} contains a proof based on a 
hypergeometric representation of $T(m)$. Section \ref{sec-limiting} shows 
that $T(m)$ converges to the value 
\begin{equation}
\lim\limits_{m \to \infty} 
\sum_{r=2}^{m+1} \binom{2r}{r} \binom{m+1}{r} 
\frac{(r-1)}{2^{r} \binom{4m}{r}} =  \frac{2-\sqrt{2}}{2} \sim 0.292893.
\label{limit-one}
\end{equation}
\noindent
This limit was \textit{incorrectly} conjectured in \cite{boros-1999b} 
to be $1- \ln 2 \sim 0.306853$.  The authors have failed to produce a proof 
of Theorem \ref{missing-1} by the automatic techniques developed in 
\cite{petkovsek-1996a}. These methods yield recurrences for the summands 
in \eqref{task-4}, but it is not possible to conclude from them 
that $S_{m,\ell}$ is increasing.  The last section shows that the sequence 
$\{ T(m):  \, m \geq 2 \}$ is an increasing sequence.

\section{The bound on $T(m)$}
\label{bound-T}

The result stated in Theorem \ref{missing-2} is 
equivalent to the bound 
\begin{equation}
\label{value-tn}
T(m) := 
\sum_{r=2}^{m+1} \binom{2r}{r} \binom{m+1}{r} \frac{(r-1)}{2^{r} \binom{4m}{r}}
< 1.
\end{equation}

A direct proof of this result is given  next. Section \ref{miss-2} presents 
a proof based on a hypergeometric representation of $T(m)$. 

\begin{theorem}
\label{bound-tm}
The inequality $T(m) < 1$ holds.
\end{theorem}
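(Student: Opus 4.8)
The plan is to show that the summands decay geometrically in $r$ and then dominate $T(m)$ by a convergent geometric series. Write
\[
t_r := \binom{2r}{r}\binom{m+1}{r}\frac{r-1}{2^{r}\binom{4m}{r}},
\]
so that $T(m)=\sum_{r=2}^{m+1}t_r$ with every $t_r>0$ (here $r-1\ge 1$ and all the binomials are positive). The whole estimate will be driven by the ratio of consecutive terms.

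First I would compute $t_{r+1}/t_r$. Using the elementary quotients $\binom{2r+2}{r+1}/\binom{2r}{r}=2(2r+1)/(r+1)$, $\binom{m+1}{r+1}/\binom{m+1}{r}=(m+1-r)/(r+1)$, and $\binom{4m}{r}/\binom{4m}{r+1}=(r+1)/(4m-r)$, together with the factor $r/(r-1)$ from the $(r-1)$ weight and $1/2$ from $2^{-r}$, everything simplifies to
\[
\frac{t_{r+1}}{t_r}=\frac{r(2r+1)}{r^{2}-1}\cdot\frac{m+1-r}{4m-r}.
\]
The point of this factored form is that the two fractions can be bounded independently.

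Next I would bound each factor for $r\ge 2$. The $m$-dependent factor satisfies $\frac{m+1-r}{4m-r}\le\frac14$, since after clearing denominators this is equivalent to $3r\ge 4$. The purely $r$-dependent factor then satisfies $\frac{r(2r+1)}{4(r^{2}-1)}\le\frac56$, since clearing denominators reduces it to $(r-2)(4r+5)\ge 0$. Both hold for all $r\ge 2$, so $t_{r+1}/t_r\le 5/6$ and therefore $t_r\le(5/6)^{r-2}t_2$. Summing the geometric majorant gives
\[
T(m)\le t_2\sum_{j\ge 0}\Big(\tfrac56\Big)^{j}=6\,t_2 .
\]
Finally I would evaluate $t_2=\dfrac{3(m+1)}{8(4m-1)}$, so the target bound $6t_2<1$ becomes $9(m+1)<4(4m-1)$, i.e.\ $7m>13$, valid for all $m\ge 2$; the remaining case $m=1$ is handled directly since then $T(1)=t_2=\tfrac14<1$.

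I expect the main obstacle to be that this geometric estimate is genuinely tight near the start of the sum: as $r=2$ and $m\to\infty$ the ratio $t_{r+1}/t_r$ tends to exactly $5/6$, so the geometric factor $1/(1-5/6)=6$ leaves essentially no slack, and the closing inequality $6t_2<1$ only barely holds (indeed it fails at $m=1$). The key idea that keeps the argument elementary is to split the consecutive-term ratio into an $r$-part and an $m$-part and to control the $m$-part by the uniform bound $1/4$; any coarser handling of $\frac{m+1-r}{4m-r}$ would push the geometric ratio above the threshold needed to beat $1$.
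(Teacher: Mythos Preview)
Your argument is correct. The ratio computation, the two inequalities $\frac{m+1-r}{4m-r}\le\frac14$ and $\frac{r(2r+1)}{4(r^2-1)}\le\frac56$ for $r\ge 2$, the geometric majorization $T(m)\le 6t_2$, the value $t_2=\frac{3(m+1)}{8(4m-1)}$, and the final check $9(m+1)<4(4m-1)$ for $m\ge 2$ all verify without trouble, and the separate treatment of $m=1$ is clean.

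The paper takes a different but equally elementary route: instead of controlling the ratio $t_{r+1}/t_r$, it proves the termwise inequality $\binom{2r}{r}\binom{m+1}{r}\le\binom{4m}{r}$ for $2\le r\le m+1$ (by comparing the growth of the two sides in $r$), and then simply bounds
\[
T(m)\ <\ \sum_{r=2}^{m+1}\frac{r-1}{2^{r}}\ =\ 1-\frac{m+2}{2^{m+1}}\ <\ 1.
\]
Their approach absorbs the $m$-dependence entirely into the binomial comparison and leaves a purely numerical tail sum; it works uniformly for all $m\ge 1$ with no special case. Your approach instead concentrates the work into a single geometric-decay estimate, which is arguably more transparent about \emph{why} the sum is small, at the price of a tight constant (the limiting ratio at $r=2$ is exactly $5/6$) that forces the separate verification at $m=1$. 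Both proofs are short and self-contained.
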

\begin{proof}
First, it is shown by induction that for $m$ fixed and $2 \leq r \leq m+1$
\begin{equation}
a_m(r):=\binom{2r}r\binom{m+1}r \leq b_m(r):=\binom{4m}{r}.
\end{equation}
\noindent
If $r=2$: $b_m(2)-a_m(2)=5m(m-1) \geq 0$. Now observe that 
\begin{equation*}
\frac{b_m(r+1)}{b_m(r)}-\frac{a_m(r+1)}{a_m(r)}=
\frac{4m-r}{r+1}-\frac{2(2r+1)(m+1-r)}{(r+1)^2}=
\frac{2(m-1)+3r(r-1)}{(r+1)^2}>0.
\end{equation*}
\noindent
This gives the inductive step written as
\begin{equation*}
b_m(r)\frac{b_m(r+1)}{b_m(r)}>a_m(r)\frac{a_m(r+1)}{a_m(r)}.
\end{equation*}
\noindent
The inequality $a_m(r)<b_m(r)$ now yields
\begin{equation*}
T(m)=
\sum_{r=2}^{m+1}\frac{a_m(r)}{b_m(r)}\frac{r-1}{2^r}
< \sum_{r=2}^{m+1}\frac{r-1}{2^r}=1-\frac{m+2}{2^{m+1}}<1.
\end{equation*}
\end{proof}

\section{The hypergeometric representation of $T(m)$}
\label{miss-2}

This section provides a hypergeometric representation of the sum 
\begin{equation}
T(m) = \sum_{r=2}^{m+1} \binom{2r}{r} \binom{m+1}{r} 
\frac{(r-1)}{2^{r} \binom{4m}{r}}.
\end{equation}
\noindent

\begin{proposition}
\label{prop-hypertn}
The sum $T(m)$ is given by 
\begin{equation}
T(m) = 1 - \pFq21{\tfrac{1}{2}, -1-m}{-4m}{2} + 
\frac{m+1}{4m} \pFq21{\tfrac{3}{2}, -m}{1-4m}{2}.
\label{hyp-rep}
\end{equation}
\end{proposition}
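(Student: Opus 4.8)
The plan is to start from the definition of $T(m)$ as a sum over $r$, split the single summand into pieces whose $r$-dependence matches the structure of a Gauss hypergeometric term, and then recognize each resulting sum as a ${}_2F_1$ evaluated at $2$. The factor $(r-1)$ in the numerator is the obstacle to immediate recognition, since a clean ${}_2F_1$ has a ratio of consecutive terms that is rational in $r$ of a fixed shape; the trick is to write $r-1 = r - 1$ and peel the summand into two hypergeometric families by shifting indices, or equivalently to absorb the linear factor using the identity $(r-1)\binom{m+1}{r} = (m+1)\binom{m}{r-1} - \binom{m+1}{r}$, so that each resulting piece has the binomials in ascending-factorial form.

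First I would convert every binomial into Pochhammer symbols so the ratio of consecutive summands becomes transparent. Writing $\binom{2r}{r} = (-4)^{r}\frac{(\tfrac12)_r}{r!}$ (the standard central-binomial identity), and $\binom{m+1}{r} = (-1)^r\frac{(-1-m)_r}{r!}$, $\binom{4m}{r}^{-1} = \frac{r!\,(-4m)_r}{(-1)^r\,(4m)!/(4m-r)!}$ rewritten via $(-4m)_r$, the combination $\binom{2r}{r}\binom{m+1}{r}\binom{4m}{r}^{-1}2^{-r}$ collapses to a term of the form $\frac{(\tfrac12)_r\,(-1-m)_r}{(-4m)_r\,r!}\,2^{r}$, which is exactly the general term of $\pFq21{\tfrac12,\,-1-m}{-4m}{2}$. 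This identifies the part of the sum carrying no $(r-1)$ factor.

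The main work is then bookkeeping the $(r-1)$ factor and the shifted range. Splitting $(r-1)=r-1$ and using $r\,(\tfrac12)_r = \tfrac12\,(r)\cdot\ldots$, or more cleanly reindexing the $r$-part of the sum by $r\mapsto r+1$, shifts the parameters $\tfrac12\mapsto\tfrac32$, $-1-m\mapsto -m$, and $-4m\mapsto 1-4m$, and produces the prefactor $\frac{m+1}{4m}$ in front of the second ${}_2F_1$ in \eqref{hyp-rep}; this is where the parameters of the second hypergeometric term come from. I would carry out the two reindexings in parallel and collect constants carefully, since the value $2$ of the argument and the negative upper parameters make several Pochhammer symbols vanish or change sign, and an error there produces the wrong prefactor. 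Finally, the sums in the definition start at $r=2$, whereas the ${}_2F_1$ functions start at $r=0$; accounting for the two missing low-order terms ($r=0$ and $r=1$, the latter killed by $r-1$) yields the leading constant $1$ in \eqref{hyp-rep}. The hardest step is tracking these boundary terms and the sign conventions of $(-4m)_r$ consistently, so that the $r=0,1$ contributions telescope to exactly $1$; everything else is routine Pochhammer algebra.
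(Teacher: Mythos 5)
Your proposal is correct and follows essentially the same route as the paper's proof: rewrite the summand in Pochhammer form as $\frac{(\tfrac12)_r(-1-m)_r}{(-4m)_r\,r!}\,2^r$, split $(r-1)=r-1$, shift the index in the $r$-part to produce $\frac{m+1}{4m}\,\pFq21{\tfrac32,\,-m}{1-4m}{2}$, and account for the missing $r=0,1$ terms, whose contributions cancel except for the leading $1$. One small slip in your writeup: the central-binomial identity is $\binom{2r}{r}=4^r(\tfrac12)_r/r!$, not $(-4)^r(\tfrac12)_r/r!$ (your subsequent collapsed form with $+2^r$ is the correct one, so nothing downstream is affected).
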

\begin{proof}
Since $\begin{displaystyle} \binom{m}{k} = \frac{(-1)^{k} (-m)_{k}}{k!} 
\end{displaystyle}$, it follows that 
$\begin{displaystyle}
\frac{\binom{m+1}{r}}{\binom{4m}{r}} = \frac{(-1-m)_{r}}{(-4m)_{r}}
\end{displaystyle}$. This relation and 
$\begin{displaystyle} \left( \tfrac{1}{2} \right)_{r} = 
(2r)!/(2^{2r} \, r!) \end{displaystyle}$
\noindent 
give
\begin{equation}
T(m) = \sum_{r=2}^{m+1} \frac{\left(\tfrac{1}{2} \right)_{r}}{r!} 
\frac{(r-1)2^{r}  
(-1-m)_{r}}{(-4m)_{r}}.
\end{equation}
\noindent
Therefore 
\begin{eqnarray*}
T(m) & = & 
- \sum_{r=2}^{m+1} \frac{ \left( \tfrac{1}{2} \right)_{r} 
(-1-m)_{r} 2^{r}}{(-4m)_{r} \, r!} + 
\sum_{r=2}^{m+1} \frac{ \left( \tfrac{1}{2} \right)_{r} 
(-1-m)_{r} 2^{r}}{(r-1)! \, (-4m)_{r}}   \\
& = & 
1 + \frac{m+1}{4r} -
\sum_{r=0}^{m+1} \frac{ \left( \tfrac{1}{2} \right)_{r} (-1-m)_{r} 2^{r}}
{(-4m)_{r} \, r!} + 
\frac{m+1}{4m} 
\sum_{r=0}^{m+1} \frac{ \left( \tfrac{1}{2} \right)_{r} (-1-m)_{r} 2^{r}}
{(-4m)_{r} \, (r-1)!} \frac{4m}{m+1} \\
& = & 
1 - 
\sum_{r=0}^{m+1} \frac{ \left( \tfrac{1}{2} \right)_{r} (-1-m)_{r}}
{(-4m)_{r} } \frac{2^{r}}{r!} + 
\frac{m+1}{4m} 
\left\{ 1 + \sum_{r=2}^{m+1} \frac{ \left( \tfrac{1}{2} \right)_{r} 2^{r}}
{(r-1)!} \frac{4m}{m+1} \, \frac{(-1-4m)_{r}}{(-4m)_{r}} \right\} \\
& = & 1 - \pFq21{\tfrac{1}{2},-1-m}{-4m}{2} + 
\frac{m+1}{4m} 
\sum_{r=2}^{m+1} \frac{ \left( \tfrac{1}{2} \right)_{r} 2^{r}}{(r-1)!} 
\frac{4m}{m+1} \frac{(-1-m)_{r}}{(-4m)_{r}} \\
& = & 1 - \pFq21{\tfrac{1}{2},-1-m}{-4m}{2} + 
\frac{m+1}{4m} \sum_{r=0}^{m} \frac{ \left( \tfrac{3}{2} \right)_{r}}{r!} 
\frac{2^{r} \, (-m)_{r}}{(1-4m)_{r}} \\
& = & 1 - \pFq21{\tfrac{1}{2},-1-m}{-4m}{2} + 
\frac{m+1}{4m}  \pFq21{\tfrac{3}{2},-m}{1-4m}{2}.
\end{eqnarray*}

\end{proof}

The next result provides an integral representation 
for $T(m)$. 

\begin{proposition}
\label{integ-tn}
The sum $T(m)$ is given by 
\begin{equation}
T(m) = \frac{3(m+1)}{16(4m-1)} \int_{0}^{2} 
t \, \pFq21{\tfrac{5}{2}, 1-m}{2-4m}{t} \, dt.
\end{equation}
\end{proposition}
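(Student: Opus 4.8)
The plan is to start from the hypergeometric representation established in Proposition~\ref{prop-hypertn}, namely
\begin{equation*}
T(m) = 1 - \pFq21{\tfrac{1}{2}, -1-m}{-4m}{2} +
\frac{m+1}{4m} \pFq21{\tfrac{3}{2}, -m}{1-4m}{2},
\end{equation*}
and to recognize that the two $\,{}_2F_1$ terms can be combined into a single one. The key observation is that differentiating the series $\pFq21{\tfrac12,-1-m}{-4m}{t}$ term by term in the variable $t$ lowers the factorials in the denominator by one, and contiguous-relation bookkeeping should let me write the difference $\frac{m+1}{4m}\pFq21{\tfrac32,-m}{1-4m}{2} - \pFq21{\tfrac12,-1-m}{-4m}{2}$ (plus the constant $1$) as a derivative or integral of a closely related hypergeometric function. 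The target integrand, a $\,{}_2F_1$ with top parameters $\tfrac52$ and $1-m$ and bottom parameter $2-4m$ evaluated at the dummy variable $t$, strongly suggests that the entire expression for $T(m)$ should be realized as $\int_0^2$ of something, since integrating a hypergeometric series in $t$ raises a denominator factorial (introducing the $1/(r+1)$ that converts $1/r!$ into $1/(r+1)!$) and shifts the bottom parameter.

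First I would expand the candidate integrand as a power series: using $\pFq21{\tfrac52,1-m}{2-4m}{t} = \sum_{r\ge 0} \frac{(\tfrac52)_r (1-m)_r}{(2-4m)_r}\frac{t^r}{r!}$, multiply by $t$ and integrate term by term over $[0,2]$ to get $\int_0^2 t\,\pFq21{\tfrac52,1-m}{2-4m}{t}\,dt = \sum_{r\ge0}\frac{(\tfrac52)_r(1-m)_r}{(2-4m)_r\,r!}\cdot\frac{2^{r+2}}{r+2}$. Then I would multiply by the prefactor $\frac{3(m+1)}{16(4m-1)}$ and reindex. The aim is to match this, coefficient by coefficient, against the explicit summand of $T(m)$, which from the proof of Proposition~\ref{prop-hypertn} is
\begin{equation*}
T(m) = \sum_{r=2}^{m+1} \frac{(\tfrac12)_r}{r!}\,\frac{(r-1)\,2^r\,(-1-m)_r}{(-4m)_r}.
\end{equation*}
The bulk of the work is the elementary but fiddly Pochhammer algebra needed to reconcile the two parameter sets: I must verify identities like $\frac{(\tfrac52)_r}{(\tfrac12)_{r+2}}$ carrying the correct constant, the shift $(1-m)_r$ versus $(-1-m)_{r+2}$, and $(2-4m)_r$ versus $(-4m)_{r+2}$, along with tracking how the factor $(r-1)$ and the shifted index $r+2$ produced by the $t$-times-integration interact. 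A clean way to organize this is to replace the dummy $r$ in the integrated series by $s = r+2$, so that $\frac{2^{r+2}}{r+2}=\frac{2^s}{s}$ and the summation naturally begins at $s=2$, matching the lower limit of $T(m)$.

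The main obstacle I anticipate is bookkeeping precision rather than any conceptual difficulty: one must pin down every constant so that the three prefactors $\tfrac52,\ (4m-1),\ \tfrac{3(m+1)}{16}$ combine exactly to cancel the extra factors introduced by the Pochhammer shifts and by the $1/s$ from integration, leaving precisely $\frac{(r-1)}{2^r}$ (equivalently $\frac{(s-3)}{\text{power of }2}$ after reindexing) in front of $\binom{2r}{r}\binom{m+1}{r}/\binom{4m}{r}$. To reduce the risk of a sign or off-by-one error, I would alternatively confirm the claimed identity by checking it symbolically at a couple of small values of $m$ (say $m=2$ and $m=3$) before committing to the general reindexing, and then present the verification as a single computation showing that the general coefficient of $t^r$ in the integrand, after integration and multiplication by the prefactor, equals the general term of $T(m)$ term by term. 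This reduces the proposition to an identity of ratios of Pochhammer symbols, which is a routine (if tedious) finite verification.
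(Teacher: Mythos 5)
Your plan is correct and it does close. With $s=r+2$ the needed identities are $(\tfrac12)_{r+2}=\tfrac34\,(\tfrac52)_r$, $(-1-m)_{r+2}=m(m+1)(1-m)_r$, $(-4m)_{r+2}=4m(4m-1)(2-4m)_r$, and $(r+2)!=(r+2)(r+1)\,r!$; the factor $s-1=r+1$ coming from the summand of $T(m)$ cancels against the $(r+1)$ in $(r+2)!$, and the remaining constants assemble to exactly $\tfrac34\cdot\tfrac{m(m+1)}{4m(4m-1)}=\tfrac{3(m+1)}{16(4m-1)}$ times $\tfrac{2^{r+2}}{r+2}$, which is precisely what termwise integration of the terminating series produces (one slip: your parenthetical ``$(s-3)$'' should be ``$(s-1)$''; the Pochhammer shifts you list are the correct ones). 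This is, however, a genuinely different route from the paper's. The paper never opens up the series of the integrand: it applies the derivative rule \eqref{der-hyp} twice, first writing $\pFq21{\tfrac{5}{2},1-m}{2-4m}{t}$ as a constant multiple of $\frac{d}{dt}\pFq21{\tfrac{3}{2},-m}{1-4m}{t}$ and integrating by parts to obtain $\tfrac{4(4m-1)}{3m}\pFq21{\tfrac{3}{2},-m}{1-4m}{2}$ minus a remaining integral, then writing $\pFq21{\tfrac{3}{2},-m}{1-4m}{t}=\tfrac{8m}{m+1}\frac{d}{dt}\pFq21{\tfrac{1}{2},-1-m}{-4m}{t}$ to evaluate that remaining integral by the fundamental theorem of calculus; after multiplying by the prefactor, the result is exactly the right-hand side of \eqref{hyp-rep}, so Proposition \ref{prop-hypertn} finishes the proof. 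The paper's argument is shorter but relies on Proposition \ref{prop-hypertn} as stated and on careful boundary-term bookkeeping (using $\pFq21{a,b}{c}{0}=1$). Your argument is more elementary and nearly self-contained: since $(1-m)_r$ truncates the series, termwise integration is finite algebra with no convergence issue, and you need only the intermediate series form $T(m)=\sum_{r=2}^{m+1}(\tfrac12)_r\,(r-1)\,2^r\,(-1-m)_r/\bigl(r!\,(-4m)_r\bigr)$ from the proof of Proposition \ref{prop-hypertn}, not its final ${}_2F_1$ form; the price is the index-shift bookkeeping, which, as verified above, works out exactly.
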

\begin{proof}
Integrate by parts and use
\begin{equation}
\frac{d}{dt} \pFq21{a,b}{c}{t} = \frac{ab}{c} \pFq21{a+1,b+1}{c+1}{t}
\label{der-hyp}
\end{equation}
\noindent 
to produce 
\begin{equation*}
\int_{0}^{2} t \, \pFq21{\tfrac{5}{2},1 - m }{2 - 4m}{t} \, dt = 
\frac{4(4m-1)}{3m} \pFq21{\tfrac{3}{2},-m}{1-4m}{2} - 
\frac{2(4m-1)}{3m} 
\int_{0}^{2} \pFq21{\tfrac{3}{2}, - m }{1 - 4m}{t} \, dt.
\end{equation*}
\noindent
The last integral is evaluated using \eqref{der-hyp} to write 
\begin{equation*}
\pFq21{\tfrac{3}{2}, - m }{1 - 4m}{t}  = \frac{8m}{m+1} 
\frac{d}{dt} 
\pFq21{\tfrac{1}{2}, -1- m }{ - 4m}{t}
\end{equation*}
\noindent
and the result follows. 
\end{proof}

The next result provides a bound for the integrand in 
Proposition \ref{integ-tn}.

\begin{proposition}
\label{bound-integ-tn}
Let $n \in \mathbb{N}, \, n \geq 2$ and $0 \leq t \leq 2$. Then
\begin{equation*}
\left| \pFq21{\tfrac{5}{2}, 1-m}{2-4m}{t}  \right| 
\leq 9 \sqrt{3}(3-t)^{-5/2}.
\end{equation*}
\end{proposition}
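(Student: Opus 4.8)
The plan is to expand the left-hand side as a terminating series and to dominate it, term by term, by the binomial expansion of $(1-t/3)^{-5/2}$. Because $1-m$ is a negative integer (recall $m \geq 2$), the Gauss series terminates, giving
\begin{equation*}
\pFq21{\tfrac{5}{2},1-m}{2-4m}{t} = \sum_{r=0}^{m-1} \frac{\left( \tfrac{5}{2} \right)_{r} (1-m)_{r}}{(2-4m)_{r}} \, \frac{t^{r}}{r!}.
\end{equation*}
Since $0 \leq t \leq 2$, the triangle inequality reduces the problem to controlling the Pochhammer ratio $\left| (1-m)_{r}/(2-4m)_{r} \right|$ for $0 \leq r \leq m-1$. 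Writing both symbols as products and taking absolute values factor by factor, for such $r$ one has
\begin{equation*}
\left| \frac{(1-m)_{r}}{(2-4m)_{r}} \right| = \prod_{j=0}^{r-1} \frac{m-1-j}{4m-2-j},
\end{equation*}
where each factor on the right is positive because $j \leq r-1 \leq m-2$.

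First I would establish the key pointwise estimate $\frac{m-1-j}{4m-2-j} \leq \tfrac{1}{3}$ for $0 \leq j \leq m-2$. Clearing denominators (both positive), this is equivalent to $3(m-1-j) \leq 4m-2-j$, i.e.\ to $0 \leq m+1+2j$, which holds trivially. Consequently each factor is at most $\tfrac{1}{3}$, and therefore
\begin{equation*}
\left| \frac{(1-m)_{r}}{(2-4m)_{r}} \right| \leq \frac{1}{3^{r}}, \qquad 0 \leq r \leq m-1.
\end{equation*}

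With this in hand I would sum the majorized series. Using $0 \leq t \leq 2$, so that $t/3 \leq \tfrac{2}{3} < 1$,
\begin{equation*}
\left| \pFq21{\tfrac{5}{2},1-m}{2-4m}{t} \right| \leq \sum_{r=0}^{m-1} \frac{\left( \tfrac{5}{2} \right)_{r}}{r!} \left( \frac{t}{3} \right)^{r} \leq \sum_{r=0}^{\infty} \frac{\left( \tfrac{5}{2} \right)_{r}}{r!} \left( \frac{t}{3} \right)^{r} = \left( 1 - \frac{t}{3} \right)^{-5/2},
\end{equation*}
the last equality being the binomial theorem (the evaluation of ${}_1F_0$). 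Finally $\left( 1 - \tfrac{t}{3} \right)^{-5/2} = 3^{5/2}(3-t)^{-5/2} = 9\sqrt{3}\,(3-t)^{-5/2}$, which is exactly the claimed bound. The argument is essentially mechanical; the only point requiring care is the factor-by-factor comparison with $\tfrac{1}{3}$, which is precisely what forces the constant $3^{5/2} = 9\sqrt{3}$ to appear, together with the verification that every factor in the relevant range keeps a fixed sign, so that the absolute value of the ratio splits as a product.
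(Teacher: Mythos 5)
Your proof is correct and follows essentially the same route as the paper: expand the terminating series, bound the Pochhammer ratio $(1-m)_{r}/(2-4m)_{r}$ by $3^{-r}$, and sum the dominating series via the binomial theorem $\pFq10{a}{-}{z}=(1-z)^{-a}$. Your factor-by-factor bound $\frac{m-1-j}{4m-2-j}\leq\tfrac{1}{3}$ is the same inequality the paper packages as the statement that $b_{k}(m)=3^{k}(1-m)_{k}/(2-4m)_{k}$ is decreasing in $k$, so the two arguments coincide.
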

\begin{proof}
The hypergeometric function is given by
\begin{equation*}
\pFq21{\tfrac{5}{2},1-m}{2-4m}{t}   = 
\sum_{k=0}^{m-1} \left( \frac{5}{2} \right)_{k} 
\frac{(1-m)_{k}}{(2-4m)_{k}}.
\end{equation*}
The bound 
\begin{equation}
\frac{(1-m)_{k}}{(2 - 4m)_{k}} \leq \frac{1}{3^{k}}
\label{bound-111}
\end{equation}
\noindent
follows directly from the observation that $b_{k}(m) = 
3^{k}(1-m)_{k}/(2-4m)_{k}$ satisfies $b_{0}(m) = 1$ and it 
is decreasing in $k$. Indeed,
\begin{equation}
\frac{b_{k+1}(m)}{b_{k}(m)} = \frac{3(1-m+k)}{2-4m+k} < 1.
\end{equation}
\noindent 
Then \eqref{bound-111} gives 
\begin{eqnarray*}
\pFq21{\tfrac{5}{2},1-m}{2-4m}{t}  & \leq  & 
\sum_{k=0}^{m-1} \left( \frac{5}{2} \right)_{k} \frac{t^{k}}{3^{k} k!}  \\
& \leq  &  \sum_{k=0}^{\infty} \left( \frac{5}{2} \right)_{k} \frac{(t/3)^{k}}
{k!} \\
& = &  \pFq10{\tfrac{5}{2}}{-}{\frac{t}{3}}. 
\end{eqnarray*}
\noindent
The evaluation of the final hypergeometric sum comes from the binomial theorem 
\begin{equation}
\pFq10{a}{-}{z} = (1-z)^{-a}, \text{ for } |z| < 1. 
\label{bin-theorem}
\end{equation}
\end{proof}

The bound in Theorem \ref{missing-2} is now obtained. 

\begin{corollary}
For $m \in \mathbb{N}$, the function $T(m)$ satisfies 
\begin{equation}
T(m) < 1.
\end{equation}
\end{corollary}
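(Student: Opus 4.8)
The plan is to feed the pointwise estimate of Proposition \ref{bound-integ-tn} into the integral representation of Proposition \ref{integ-tn} and reduce everything to one elementary integral. The prefactor $\frac{3(m+1)}{16(4m-1)}$ is strictly positive for $m \geq 1$, and on $[0,2]$ we have $t \geq 0$, so I would bound the integrand by its absolute value and apply the inequality $\bigl| \pFq21{\tfrac{5}{2}, 1-m}{2-4m}{t} \bigr| \leq 9\sqrt{3}(3-t)^{-5/2}$ directly, giving
\begin{equation*}
T(m) \leq \frac{3(m+1)}{16(4m-1)} \cdot 9\sqrt{3} \int_{0}^{2} t\,(3-t)^{-5/2}\,dt.
\end{equation*}
This turns the whole question into evaluating a single integral of a power function and then checking a rational inequality in $m$.

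For the integral itself I would substitute $u = 3-t$, so that $t = 3-u$ and the integrand splits as $3u^{-5/2} - u^{-3/2}$ over $u \in [1,3]$. The antiderivatives are the elementary powers $-2u^{-3/2}$ and $2u^{-1/2}$, and evaluating at the endpoints collapses to the clean value $\int_{0}^{2} t\,(3-t)^{-5/2}\,dt = \frac{4}{3\sqrt{3}}$. Substituting this back, the two factors of $\sqrt{3}$ cancel against each other and the numerical constants reduce, leaving the transparent bound
\begin{equation*}
T(m) \leq \frac{9(m+1)}{4(4m-1)}.
\end{equation*}

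It then remains only to verify $\frac{9(m+1)}{4(4m-1)} < 1$. Clearing the (positive) denominator, this is equivalent to $9m+9 < 16m - 4$, i.e. $7m > 13$, which holds for every integer $m \geq 2$ — precisely the range in which Proposition \ref{bound-integ-tn} is stated. The value $m=1$ is excluded from this derivation (indeed the bound reads $\tfrac{3}{2}$ there), so I would dispatch it directly from the definition: the sum $T(1)$ contains only the term $r=2$, yielding $T(1) = \binom{4}{2}\binom{2}{2}\frac{1}{2^{2}\binom{4}{2}} = \tfrac{1}{4} < 1$. The argument is otherwise entirely routine, since all the genuine analytic work has already been carried out in Propositions \ref{integ-tn} and \ref{bound-integ-tn}; the one thing worth watching is the constant bookkeeping, and it is reassuring that the resulting rational bound dips below $1$ exactly at the first admissible value $m=2$, which is what forces the isolated check of $m=1$.
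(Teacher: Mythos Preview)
Your argument is correct and follows essentially the same route as the paper: both use the integral representation of Proposition~\ref{integ-tn} together with the pointwise bound of Proposition~\ref{bound-integ-tn}, compute the elementary integral $\int_0^2 t(3-t)^{-5/2}\,dt = \tfrac{4}{3\sqrt{3}}$, and handle $m=1$ by direct evaluation $T(1)=\tfrac14$. The only cosmetic difference is that the paper first replaces the prefactor $\tfrac{3(m+1)}{16(4m-1)}$ by its maximum $\tfrac{9}{112}$ (attained at $m=2$) to obtain the uniform bound $T(m)\le \tfrac{27}{28}$, whereas you keep the $m$-dependence and arrive at $T(m)\le \tfrac{9(m+1)}{4(4m-1)}$; at $m=2$ your bound is exactly $\tfrac{27}{28}$, so the two computations coincide.
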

\begin{proof}
It is easy to compute that $T(1) = \tfrac{1}{4}$. For $m \geq 2$, observe that 
\begin{equation}
\frac{3(m+1)}{16(4m-1)} = \frac{3}{16} \left( \frac{1}{4} + 
\frac{5/4}{4m-1} \right) 
\leq \frac{9}{112}
\end{equation}
\noindent 
and thus
\begin{equation}
T(m) \leq \frac{9}{112} \int_{0}^{2} \frac{9 \sqrt{3} \, t \, dt}
{(3 - t)^{5/2}} = \frac{27}{28} < 1.
\end{equation}
\end{proof}

\begin{note}
This inequality completes the proof that $\{ d_{\ell}(m) \}$ is 
unimodal. 
\end{note}

\section{The limiting behavior of $T(m)$}
\label{sec-limiting}

This section is devoted to establish the limiting value of $T(m)$.

\begin{theorem}
\label{limit-tn}
The function $T(m)$ satisfies 
\begin{equation}
\lim\limits_{m \to \infty} T(m) = \frac{2 -\sqrt{2}}{2}. 
\end{equation}
\end{theorem}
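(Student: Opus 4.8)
The plan is to pass to the limit inside the integral representation of Proposition \ref{integ-tn}, using the uniform bound of Proposition \ref{bound-integ-tn} to legitimize the interchange by dominated convergence. Starting from
\[
T(m) = \frac{3(m+1)}{16(4m-1)} \int_{0}^{2} t\, \pFq21{\tfrac{5}{2},1-m}{2-4m}{t}\, dt,
\]
I would first record that the prefactor satisfies $\tfrac{3(m+1)}{16(4m-1)} \to \tfrac{3}{64}$ as $m \to \infty$, so that the whole problem reduces to computing the limit of the integral.

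First I would identify the pointwise limit of the integrand. For each fixed $t \in [0,2]$ the $k$-th term of the finite sum
\[
\pFq21{\tfrac{5}{2},1-m}{2-4m}{t} = \sum_{k=0}^{m-1} (\tfrac{5}{2})_k \frac{(1-m)_k}{(2-4m)_k} \frac{t^k}{k!}
\]
tends to $\frac{(\tfrac{5}{2})_k}{k!}(t/4)^k$, since $\frac{(1-m)_k}{(2-4m)_k}\to 4^{-k}$ for fixed $k$, and the summand is nonnegative for $0\le k\le m-1$. The bound $\frac{(1-m)_k}{(2-4m)_k}\le 3^{-k}$ established in the proof of Proposition \ref{bound-integ-tn} dominates this term by $\frac{(\tfrac{5}{2})_k}{k!}(2/3)^k$, which is summable; hence Tannery's theorem gives
\[
\lim_{m\to\infty} \pFq21{\tfrac{5}{2},1-m}{2-4m}{t} = \pFq10{\tfrac{5}{2}}{-}{\tfrac{t}{4}} = \left(1-\tfrac{t}{4}\right)^{-5/2}
\]
by \eqref{bin-theorem}. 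Then the estimate $\left|\pFq21{\tfrac{5}{2},1-m}{2-4m}{t}\right| \le 9\sqrt{3}(3-t)^{-5/2}$ of Proposition \ref{bound-integ-tn} shows that $t\,\pFq21{\tfrac{5}{2},1-m}{2-4m}{t}$ is dominated on $[0,2]$ by the integrable (indeed bounded) function $9\sqrt{3}\,t(3-t)^{-5/2}$, so the dominated convergence theorem yields
\[
\lim_{m\to\infty}\int_{0}^{2} t\,\pFq21{\tfrac{5}{2},1-m}{2-4m}{t}\, dt = \int_{0}^{2} t\left(1-\tfrac{t}{4}\right)^{-5/2} dt.
\]

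It then remains only to evaluate this elementary integral, which the substitution $u = 1 - t/4$ converts into $16\int_{1/2}^{1}(u^{-5/2}-u^{-3/2})\,du = \tfrac{64-32\sqrt{2}}{3}$, so that
\[
\lim_{m\to\infty} T(m) = \frac{3}{64}\cdot\frac{64-32\sqrt{2}}{3} = 1 - \frac{\sqrt{2}}{2} = \frac{2-\sqrt{2}}{2}.
\]
The only genuinely delicate points are the two passages to the limit, inside the sum and inside the integral; but both are controlled by estimates already available in Proposition \ref{bound-integ-tn} and its proof, so no new inequality is needed, and this is where I expect the only real care to be required. As an alternative route one could instead start from the hypergeometric formula of Proposition \ref{prop-hypertn} and apply Tannery's theorem directly to each terminating ${}_2F_1$, which converge to $\pFq10{\tfrac{1}{2}}{-}{\tfrac{1}{2}} = \sqrt{2}$ and $\pFq10{\tfrac{3}{2}}{-}{\tfrac{1}{2}} = 2\sqrt{2}$; this gives $\lim_{m\to\infty} T(m) = 1 - \sqrt{2} + \tfrac{1}{4}\cdot 2\sqrt{2} = \tfrac{2-\sqrt{2}}{2}$ as well.
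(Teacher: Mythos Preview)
Your proof is correct and is essentially the paper's first proof: you invoke Proposition \ref{integ-tn}, obtain the pointwise limit of the hypergeometric integrand via Tannery's theorem with the bound $\tfrac{(1-m)_k}{(2-4m)_k}\le 3^{-k}$ (this is the content of Proposition \ref{prop-4.3}), apply dominated convergence using the envelope from Proposition \ref{bound-integ-tn}, and evaluate the resulting elementary integral. The alternative you sketch at the end, passing to the limit directly in the formula of Proposition \ref{prop-hypertn}, is likewise one of the approaches the paper records.
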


The arguments will employ the classical Tannery theorem. This 
is stated next, a proof appears in \cite{bromwich-1991a}, page 136.

\begin{theorem}
(\textit{Tannery}) Assume
$\begin{displaystyle} 
a_{k} := \lim\limits_{m \to \infty} a_{k}(m)
\end{displaystyle}$ satisfies $|a_{k}(m)| 
\leq M_{k}$ with $\begin{displaystyle} \sum_{k=0}^{\infty} M_{k} < \infty
\end{displaystyle}$. 
Then 
$\begin{displaystyle} \lim \limits_{m \to \infty} \sum_{k=0}^{m} a_{k}(m) = 
\sum_{k=0}^{\infty} a_{k}.\end{displaystyle}$
\end{theorem}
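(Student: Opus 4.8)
The plan is to run the standard $\epsilon/3$ argument that presents Tannery's theorem as a discrete analogue of the dominated convergence theorem. First I would record a preliminary observation: letting $m \to \infty$ in the bound $|a_k(m)| \leq M_k$ yields $|a_k| \leq M_k$, so the series $\sum_{k=0}^{\infty} a_k$ converges absolutely and the right-hand side is well defined. Write $S(m) = \sum_{k=0}^{m} a_k(m)$ and $S = \sum_{k=0}^{\infty} a_k$.

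Fix $\epsilon > 0$. Since $\sum_{k=0}^{\infty} M_k < \infty$, I would choose an index $N$ with $\sum_{k=N+1}^{\infty} M_k < \epsilon/3$. For $m > N$ the difference splits as
\[
S(m) - S = \sum_{k=0}^{N} \bigl( a_k(m) - a_k \bigr) + \sum_{k=N+1}^{m} a_k(m) - \sum_{k=N+1}^{\infty} a_k,
\]
and I would estimate the three pieces separately. The two tail sums are each dominated by $\sum_{k=N+1}^{\infty} M_k < \epsilon/3$, using $|a_k(m)| \leq M_k$ for the first and $|a_k| \leq M_k$ for the second; the point is that both bounds hold uniformly in $m$. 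The finite head $\sum_{k=0}^{N} (a_k(m) - a_k)$ is a sum of the fixed number $N+1$ of terms, each tending to $0$ as $m \to \infty$ by hypothesis, so there is an $m_0 > N$ beyond which the head is less than $\epsilon/3$. For $m \geq m_0$ the three bounds combine to give $|S(m) - S| < \epsilon$, which is the assertion.

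The only real subtlety, and what makes the summable majorant indispensable, is the order of operations: the cutoff $N$ must be chosen \emph{once and for all} from the convergence of $\sum M_k$, independent of $m$, so that the growing tail $\sum_{k=N+1}^{m} a_k(m)$ is controlled uniformly \emph{before} $m$ is sent to infinity to annihilate the finitely many head terms. Without a convergent dominating series one could not truncate the tail uniformly in $m$, and the interchange of limit and summation would in general fail; this is exactly the hypothesis that the application in Section~\ref{sec-limiting} will need to verify.
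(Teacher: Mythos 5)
Your proof is correct: the preliminary observation that $|a_k| \leq M_k$ in the limit, the once-and-for-all choice of the cutoff $N$ from the convergence of $\sum M_k$, the uniform $\epsilon/3$ control of both tails, and the disposal of the finite head by the pointwise limits constitute a complete argument, and you correctly handle the $m$-dependent upper summation limit via the piece $\sum_{k=N+1}^{m} a_k(m)$. The paper itself gives no proof of this statement, deferring to Bromwich (page 136), and the classical proof found there is exactly the $\epsilon/3$ truncation argument you present, so your proposal coincides with the intended proof in all essentials.
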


\medskip

Three proofs of Theorem \ref{limit-tn} are presented here.  In each 
one of them, the argument boils down 
to an exchange of limits. The first one is based on the integral 
representation of $T(m)$ and it uses bounded convergence theorem and 
Tannery's theorem. The second 
one deals directly with the hypergeometric sums and it employs 
Tannery's theorem for passing to the limit in a series. A similar 
argument can be employed in the third proof.

\begin{proposition}
\label{prop-4.3}
Assume $0 \leq t < 4$ is fixed. Then
\begin{equation}
\lim\limits_{m \to \infty} 
\pFq21{\tfrac{5}{2}, 1-m}{2-4m}{t} = 
\pFq10{\tfrac{5}{2}}{-}{\frac{t}{4}} = \frac{32}{(4-t)^{5/2}}.
\end{equation}
\end{proposition}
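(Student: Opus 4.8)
The plan is to realize the limit as a term-by-term passage in the defining power series and to justify the exchange of limit and summation by Tannery's theorem stated above. Writing the hypergeometric function as the finite sum
\[
\pFq21{\tfrac{5}{2}, 1-m}{2-4m}{t} = \sum_{k=0}^{m-1} a_{k}(m),
\qquad
a_{k}(m) := \frac{\left( \tfrac{5}{2} \right)_{k} (1-m)_{k}}{(2-4m)_{k} \, k!} \, t^{k},
\]
I would first compute the pointwise limit of each coefficient. Since $(1-m)_{k}/(2-4m)_{k} = \prod_{j=0}^{k-1} (1-m+j)/(2-4m+j)$ is a product of $k$ factors, each of which tends to $\tfrac14$ as $m \to \infty$ with $k$ held fixed, one obtains $a_{k} := \lim_{m\to\infty} a_{k}(m) = \left( \tfrac{5}{2} \right)_{k} (t/4)^{k}/k!$.

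The decisive step is to produce a summable majorant valid on the whole range $0 \le t < 4$ of the statement. Here I would sharpen the estimate used in the proof of Proposition \ref{bound-integ-tn}: for $0 \le j \le k-1 \le m-2$ one has the factor-wise bound
\[
\frac{m-1-j}{4m-2-j} < \frac14 ,
\]
since $4(m-1-j) = 4m-4-4j < 4m-2-j$ is equivalent to $2+3j>0$. Multiplying these inequalities over $j$ gives $(1-m)_{k}/(2-4m)_{k} < 1/4^{k}$, and hence the uniform bound $|a_{k}(m)| \le M_{k} := \left( \tfrac{5}{2} \right)_{k} (t/4)^{k}/k!$. For fixed $t < 4$ the series $\sum_{k} M_{k}$ converges, because by the binomial theorem \eqref{bin-theorem} it equals $(1 - t/4)^{-5/2} < \infty$.

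With the hypotheses of Tannery's theorem thus verified (the summand vanishes for $k \ge m$ because $(1-m)_{k}=0$, so the upper limit may be taken to be $m$ as in the statement of that theorem), I would conclude
\[
\lim_{m\to\infty} \pFq21{\tfrac{5}{2}, 1-m}{2-4m}{t}
= \sum_{k=0}^{\infty} a_{k}
= \sum_{k=0}^{\infty} \frac{\left( \tfrac{5}{2} \right)_{k}}{k!} \left( \frac{t}{4} \right)^{k}
= \pFq10{\tfrac{5}{2}}{-}{\frac{t}{4}} ,
\]
and a final application of \eqref{bin-theorem} evaluates this to $(1-t/4)^{-5/2} = 4^{5/2}/(4-t)^{5/2} = 32/(4-t)^{5/2}$.

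The main obstacle is precisely this uniform bound: the cruder estimate $1/3^{k}$ employed in the proof of Proposition \ref{bound-integ-tn} produces a convergent majorant only for $t<3$, so the sharper constant $\tfrac14$ is essential to cover the full interval $0 \le t < 4$ required here. Once the correct majorant is in place, Tannery's theorem and the binomial theorem supply the remaining steps mechanically.
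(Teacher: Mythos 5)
Your proof is correct, and it follows the same overall route as the paper's: write the ${}_2F_1$ as the finite sum $\sum_{k=0}^{m-1} a_k(m)$, compute the termwise limit $a_k(m) \to \left(\tfrac{5}{2}\right)_k (t/4)^k/k!$, and justify the interchange by Tannery's theorem, finishing with the binomial theorem \eqref{bin-theorem}. The one genuine difference is the choice of majorant, and it matters. The paper recycles the bound \eqref{bound-111}, $(1-m)_k/(2-4m)_k \leq 3^{-k}$, so its majorant $M_k = \left(\tfrac{5}{2}\right)_k t^k/(3^k\, k!)$ sums to $(1-t/3)^{-5/2}$, which is finite only for $t<3$; indeed the paper itself declares its Tannery hypothesis valid only for $0 \leq t \leq 2$. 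That range suffices for the downstream application (the integral in Proposition \ref{integ-tn} runs over $0 \leq t \leq 2$), but it does not justify the proposition on its full stated range $0 \leq t < 4$. Your factor-wise estimate
\begin{equation*}
\frac{m-1-j}{4m-2-j} < \frac{1}{4},
\end{equation*}
which is equivalent to $2+3j>0$ (with numerator and denominator positive for $0 \leq j \leq k-1 \leq m-2$), yields the sharper bound $(1-m)_k/(2-4m)_k \leq 4^{-k}$, hence $M_k = \left(\tfrac{5}{2}\right)_k (t/4)^k/k!$ with $\sum_k M_k = (1-t/4)^{-5/2} < \infty$ for every fixed $t<4$. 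So your argument proves the proposition exactly as stated, whereas the paper's justification, read literally, covers only $t \leq 2$; your closing remark that the cruder $3^{-k}$ estimate cannot reach the full interval is precisely the point, and your proof repairs this small gap in the published argument.
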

\begin{proof}
Start with 
\begin{equation}
\pFq21{\tfrac{5}{2}, 1-m}{2-4m}{t} = \sum_{k=0}^{m-1} 
\frac{\left( \tfrac{5}{2} \right)_{k} \, (1-m)_{k}}{(2-4m)_{k} } 
\, \frac{t^{k}}{k!}
\end{equation}
\noindent
and observe that 
\begin{equation}
\frac{(1-m)_{k}}{(2-4m)_{k}} = \prod_{j=0}^{k-1} \frac{m-1-j}{4m-2-j} 
\to \frac{1}{4^{k}}
\end{equation}
\noindent 
as $m \to \infty$.  Therefore 
\begin{equation}
\lim\limits_{m \to \infty} 
\pFq21{\tfrac{5}{2}, 1-m}{2-4m}{t}  =  
\sum_{k=0}^{\infty} \frac{ \left( \tfrac{5}{2} \right)_{k} }{k!} 
\left( \frac{t}{4} \right)^{k} 
= \pFq10{\tfrac{5}{2}}{-}{\frac{t}{4}}.
\label{limit-1}
\end{equation}
\noindent
The hypergeometric sum is now evaluated using \eqref{bin-theorem}.
\end{proof}

The passage to the limit in \eqref{limit-1} uses the Tannery's theorem. In
this case 
\begin{equation}
a_{k}(m) = \frac{ \left( \tfrac{5}{2} \right)_{k} (1-m)_{k}}{(2-4m)_{k}} 
\frac{t^{k}}{k!}
\end{equation}
\noindent
satisfies 
\begin{eqnarray*}
\lim\limits_{m \to \infty} a_{k}(m) & = & 
\lim\limits_{m \to \infty}  
\left( \frac{5}{2} \right)_{k} \frac{t^{k}}{k!} 
\frac{ \left( \tfrac{1}{m}-1 \right) 
\left( \tfrac{2}{m}-1 \right)  \cdots 
\left( \tfrac{k}{m}-1 \right) }
{ \left( \tfrac{2}{m}-4 \right) 
\left( \tfrac{3}{m}-1 \right)  \cdots 
\left( \tfrac{1+k}{m}-4 \right) } \\
& = & 
\left( \frac{5}{2} \right)_{k} \frac{t^{k}}{k! \, 4^{k}}
\end{eqnarray*}
\noindent
exists. This limit is denoted by $a_{k}$.  

The result now follows from the bound 
\begin{equation}
|a_{k}(m)| \leq M_{k} := \left( \frac{5}{2} \right)_{k} \frac{t^{k}}{k! \, 
3^{k}},
\label{bound-tan1}
\end{equation}
\noindent
and the sum 
\begin{equation}
\sum_{k=0}^{\infty} M_{k} = \sum_{k=0}^{\infty} \left( \frac{5}{2} \right)_{k}
\frac{t^{k}}{k! \, 3^{k}} = \left( 1 - \frac{t}{3} \right)^{-5/2}.
\end{equation}
valid for $0 \leq t \leq 2$. Tannery's theorem gives 
\begin{equation}
\lim\limits_{m \to \infty} \sum_{k=0}^{m-1} a_{k}(m) = 
\sum_{k=0}^{\infty} a_{k}  = 
\sum_{k=0}^{\infty} \left( \frac{5}{2} \right)_{k} \frac{t^{k}}{k! \, 4^{k}} 
 = \left( 1 - \frac{t}{4} \right)^{-5/2}.
\end{equation}

%

The expression in Proposition \ref{integ-tn}, the bound 
\eqref{bound-111} and  Proposition 
\ref{bound-integ-tn} give, via the dominated convergence theorem, the 
value
\begin{eqnarray}
\lim\limits_{m \to \infty} T(m)  & = & 
\lim\limits_{m \to \infty}
\frac{3(m+1)}{16(4m-1)} \int_{0}^{2} 
\pFq21{\tfrac{5}{2}, 1-m}{2-4m}{t} \, t \, dt \label{limit-2} \\
& = & 
\frac{3}{64} \int_{0}^{2} 
\pFq10{\tfrac{5}{2}}{-}{\frac{t}{4}} \, dt
\nonumber \\
& = & 
\frac{3}{64} \int_{0}^{2} \frac{32 t}{(4-t)^{5/2}} \, dt \nonumber \\
& = & \frac{2 - \sqrt{2}}{2}. \nonumber
\end{eqnarray}

\noindent
This completes the first proof. 

\smallskip

The second proof of the limiting value of $T(m)$ uses the hypergeometric 
representation of $T(m)$ in \eqref{hyp-rep}. It amounts to proving
\begin{equation}
\lim\limits_{m \to \infty} \pFq21{\tfrac{1}{2}, -1-m}{-4m}{2}  - 
\frac{m+1}{4m} \pFq21{\tfrac{3}{2}, -m}{1-4m}{2} = \frac{\sqrt{2}}{2}.
\end{equation}

The contiguous relation \cite{temme-1996a}, page 28,
\begin{equation}
\pFq21{a+1,b}{c}{z} = 
\pFq21{a,b}{c}{z}  + 
\frac{bz}{c} \pFq21{a+1,b+1}{c+1}{z}
\label{contig-1}
\end{equation}
is used with $a = \tfrac{1}{2}, \, b = -1-m, 
\, c = -4m$ and $z=2$ to obtain 
\begin{equation*}
\pFq21{\tfrac{3}{2},-1-m}{-4m}{2} = 
\pFq21{\tfrac{1}{2},-1-m}{-4m}{2}  + 
\frac{m+1}{2m} \pFq21{\tfrac{3}{2},-m}{1-4m}{2}
\end{equation*}
\noindent
and this gives 
\begin{equation}
\frac{(m+1)}{4m} \pFq21{\tfrac{3}{2},-m}{1-4m}{2}
= \frac{1}{2} \left( 
\pFq21{\tfrac{3}{2},-1-m}{-4m}{2} - 
\pFq21{\tfrac{1}{2},-1-m}{-4m}{2}  
\right).
\label{formula-last}
\end{equation}
\noindent
Thus if suffices to prove 
\begin{equation}
\lim\limits_{m \to \infty} 
3 \,\, \pFq21{\tfrac{1}{2},-1-m}{-4m}{2} - 
\pFq21{\tfrac{3}{2},-1-m}{-4m}{2}   = \sqrt{2}.
\end{equation}
\noindent
A direct calculation shows that 
\begin{equation*}
3 \,\, \pFq21{\tfrac{1}{2},-1-m}{-4m}{2} - 
\pFq21{\tfrac{3}{2},-1-m}{-4m}{2}   = 
\sum_{k=0}^{m+1} a_{k}(m)
\end{equation*}
\noindent
with 
\begin{equation}
\label{def-ak}
a_{k}(m) = 
\sum_{k=0}^{m+1} \frac{ \left[ 3 \left( \tfrac{1}{2} \right)_{k} - 
\left( \tfrac{3}{2} \right)_{k} \right] (-1-m)_{k} 2^{k}}
{(-4m)_{k} \, k!}.
\end{equation}
\noindent
The question is now reduced to justifying passing to the limit in 
\begin{equation}
\lim\limits_{m \to \infty} \sum_{k=0}^{m+1} a_{k}(m) 
= \sum_{k=0}^{\infty} \lim\limits_{m \to \infty} a_{k}(m) 
\end{equation}
\noindent
since 
\begin{equation}
\lim\limits_{m \to \infty} a_{k}(m) = 
\left( 3 \, \left( \tfrac{1}{2} \right)_{k} - \left( \tfrac{3}{2} 
\right)_{k} \right) \frac{1}{k! \, 2^{k}}
\end{equation}
\noindent
and 
\begin{eqnarray*}
\sum_{k=0}^{\infty} \lim\limits_{m \to \infty} a_{k}(m)  & = & 
\sum_{k=0}^{\infty} 
\left( 3 \, \left( \tfrac{1}{2} \right)_{k} - \left( \tfrac{3}{2} 
\right)_{k} \right) \frac{1}{k! \, 2^{k}} \\
& = & 
3 \sum_{k=0}^{\infty} \frac{ \left( \tfrac{1}{2} \right)_{k} \, 2^{-k}}{k!} -
\sum_{k=0}^{\infty} \frac{ \left( \tfrac{3}{2} \right)_{k} \, 2^{-k}}{k!}  \\
& = & 3 (1 - 1/2)^{-1/2} - (1-1/2)^{-3/2} \\
& = & \sqrt{2}.
\end{eqnarray*}

The last step is justified using Tannery's theorem. In
the present case $a_{k}(m)$, given in \eqref{def-ak}, satisfies 
\begin{equation}
|a_{k}(m) | \leq \left( 3 \left( \tfrac{1}{2} \right)_{k} + 
\left( \tfrac{3}{2} \right)_{k} \right) \frac{2^{k}}{k!} 
\frac{(-1-m)_{k}}{(-4m)_{k}}. 
\end{equation}
\noindent
The proof of the inequality 
\begin{equation}
\frac{ (-1-m)_{k}}{(-4m)_{k}} \leq \frac{1}{3^{k}},
\label{last-bound}
\end{equation}
\noindent
is similar to the proof of \eqref{bound-111}. This is then used to verify 
that the hypothesis of Tannery's theorem are satisfied. The 
details are omitted. 

\medskip

A third proof is based on the analysis of a function that resembles 
the formula for $T(m)$. 

\begin{proposition}
For $ 0 \leq x < 1$ define 
\begin{equation}
W_{m}(x) = \sum_{r=0}^{m+1} \binom{2r}{r} \binom{m+1}{r} 
\binom{4m}{r}^{-1} x^{r}.
\end{equation}
\noindent
Then 
\begin{equation}
\lim\limits_{m \to \infty} W_{m}(x) = \frac{1}{\sqrt{1-x}} \text{ and }
\lim\limits_{m \to \infty} x \frac{d}{dx} W_{m}(x) = \frac{x}{2(1-x)^{3/2}}.
\end{equation}
\end{proposition}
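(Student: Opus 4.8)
The plan is to rewrite $W_m(x)$ as a terminating hypergeometric series and pass to the limit term by term, justifying the interchange with Tannery's theorem, exactly as in the two preceding proofs of Theorem \ref{limit-tn}. First I would use the identities $\binom{2r}{r} = 2^{2r}(\tfrac12)_r/r!$ and $\binom{m+1}{r}/\binom{4m}{r} = (-1-m)_r/(-4m)_r$, both already employed in the proof of Proposition \ref{prop-hypertn}, to write
\begin{equation*}
W_m(x) = \sum_{r=0}^{m+1} \frac{\left(\tfrac12\right)_r}{r!}\,\frac{(-1-m)_r}{(-4m)_r}\,(4x)^r = \pFq21{\tfrac12,-1-m}{-4m}{4x}.
\end{equation*}
Since $\dfrac{(-1-m)_r}{(-4m)_r} = \displaystyle\prod_{j=0}^{r-1}\frac{m+1-j}{4m-j} \to 4^{-r}$ as $m \to \infty$ for each fixed $r$, the $r$-th summand tends to $\dfrac{(\tfrac12)_r}{r!}x^r$, and the binomial theorem \eqref{bin-theorem} gives $\sum_{r\geq0}\tfrac{(\tfrac12)_r}{r!}x^r = (1-x)^{-1/2}$, which is the proposed limit.

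The delicate point is the majorant needed for Tannery's theorem, which must be summable on the entire interval $0 \leq x < 1$. The crude estimate $\frac{(-1-m)_r}{(-4m)_r} \leq 3^{-r}$ used in \eqref{last-bound} only controls $\sum_r \frac{(\tfrac12)_r}{r!}(4x/3)^r$, which converges for $x < \tfrac34$ but fails as $x \to 1$. To reach $x$ arbitrarily close to $1$, I would instead bound $4^r \frac{(-1-m)_r}{(-4m)_r}$ uniformly in $m$. Writing
\begin{equation*}
4^r\,\frac{(-1-m)_r}{(-4m)_r} = \prod_{j=0}^{r-1}\frac{4(m+1-j)}{4m-j},
\end{equation*}
one checks that the $j$-th factor exceeds $1$ only for $j = 0$ and $j = 1$, and is positive throughout since $j \leq r-1 \leq m$; hence
\begin{equation*}
4^r\,\frac{(-1-m)_r}{(-4m)_r} \leq \left(1+\tfrac1m\right)\left(1+\tfrac{1}{4m-1}\right) \leq 3
\end{equation*}
for every $m \geq 1$. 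Thus the $r$-th summand of $W_m(x)$ is dominated by $M_r := 3\,\frac{(\tfrac12)_r}{r!}x^r$, and $\sum_r M_r = 3(1-x)^{-1/2} < \infty$ for $0 \leq x < 1$. Tannery's theorem then yields the first limit.

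For the second limit I would note that $W_m$ is a polynomial, so that
\begin{equation*}
x\frac{d}{dx}W_m(x) = \sum_{r=1}^{m+1} r\,\frac{\left(\tfrac12\right)_r}{r!}\,\frac{(-1-m)_r}{(-4m)_r}\,(4x)^r
\end{equation*}
holds exactly, and the limit is taken of this finite sum; no interchange of derivative and limit is needed. The same termwise computation gives $r$-th summand $\to r\,\frac{(\tfrac12)_r}{r!}x^r$, while the uniform estimate above supplies the majorant $\tilde M_r := 3\,r\,\frac{(\tfrac12)_r}{r!}x^r$, with $\sum_r \tilde M_r = \tfrac{3x}{2}(1-x)^{-3/2} < \infty$ for $0 \leq x < 1$. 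A final application of Tannery's theorem then gives $\sum_{r\geq1} r\,\frac{(\tfrac12)_r}{r!}x^r = x\frac{d}{dx}(1-x)^{-1/2} = \frac{x}{2(1-x)^{3/2}}$, as claimed.

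The only genuine obstacle is the one isolated above: the standard bound \eqref{last-bound} is too weak near $x=1$, and the whole argument hinges on replacing it with the sharper uniform estimate for $4^r(-1-m)_r/(-4m)_r$. Everything else is a routine repetition of the Tannery arguments already carried out for $T(m)$.
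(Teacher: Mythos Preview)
Your argument is correct and follows the same strategy as the paper: rewrite the summand so that the termwise limit is $\binom{2r}{r}(x/4)^r$, then justify the interchange by a dominated-convergence/Tannery argument and sum via the binomial series. In fact you go further than the paper, which merely asserts that ``the passage to the uniform limit is justified by Weierstrass M-test or dominated convergence theorem'' without exhibiting a majorant; your observation that the earlier bound \eqref{last-bound} only works for $x<\tfrac34$ and your replacement $4^r(-1-m)_r/(-4m)_r\le 3$ supply exactly the missing uniform control, and your explicit handling of $x\,W_m'(x)$ makes precise what the paper calls ``immediate.''
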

\begin{proof}
Note that the sum defining $W_{m}(x)$ can be extended to infinity since 
$\binom{m+1}{r}$ has compact support. The proof now follows from 
\begin{equation*}
W_{m}(x) = \sum_{r=0}^{\infty} \binom{2r}{r} 
\left( \frac{x}{4} \right)^{r} 
\prod_{i=1}^{r} \left( \frac{1 - \tfrac{i-2}{m}}{1 - \tfrac{i-1}{m} }
\right) \rightarrow 
\sum_{r=0}^{\infty} \binom{2r}{r} \left( \frac{x}{4} \right)^{r} = 
\frac{1}{\sqrt{1-x}},
\end{equation*}
\noindent
where the passage to the uniform limit is justified by Weierstrass M-test 
or dominated convergence theorem. The second assertion is immediate.
\end{proof}

\begin{corollary}
The sum $T(m)$ satisfies 
\begin{equation}
\lim\limits_{m \to \infty} T(m) = \frac{2 - \sqrt{2}}{\sqrt{2}}.
\label{limit-tm}
\end{equation}
\end{corollary}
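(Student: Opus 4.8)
The plan is to read $T(m)$ directly off the generating function $W_m$ (and its derivative) at the single point $x=\tfrac12$. Write $c_r=\binom{2r}{r}\binom{m+1}{r}\binom{4m}{r}^{-1}$, so that $W_m(x)=\sum_{r=0}^{m+1}c_r x^{r}$ while $T(m)=\sum_{r=2}^{m+1}c_r\,(r-1)/2^{r}$. Splitting $\frac{r-1}{2^{r}}=\frac{r}{2^{r}}-\frac{1}{2^{r}}$ and using the two elementary identities $\sum_r c_r r x^{r}=x\,W_m'(x)$ and $\sum_r c_r x^{r}=W_m(x)$, I would evaluate both series at $x=\tfrac12$. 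The indices $r=0,1$ are present in $W_m$ but absent from $T(m)$, so they generate boundary corrections; a one-line check shows the $r=1$ contributions ($\mp c_1/2$) cancel and the $r=0$ term contributes $+1$, leaving the clean identity
\begin{equation*}
T(m) = 1 + \left[ x\frac{d}{dx}W_m(x)\right]_{x=1/2} - W_m\!\left(\tfrac12\right).
\end{equation*}
As a sanity check, at $m=1$ one has $W_1(x)=1+x+x^{2}$, and the formula returns $T(1)=1+1-\tfrac74=\tfrac14$, matching the known value.

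Next I would pass to the limit $m\to\infty$ by invoking the two assertions of the preceding Proposition at the fixed point $x=\tfrac12\in[0,1)$, namely
\begin{equation*}
\lim_{m\to\infty}W_m\!\left(\tfrac12\right)=\frac{1}{\sqrt{1-\tfrac12}}=\sqrt{2},
\qquad
\lim_{m\to\infty}\left[x\frac{d}{dx}W_m(x)\right]_{x=1/2}=\frac{\tfrac12}{2\left(1-\tfrac12\right)^{3/2}}=\frac{\sqrt{2}}{2}.
\end{equation*}
Substituting these into the identity for $T(m)$ yields
\begin{equation*}
\lim_{m\to\infty}T(m)=1+\frac{\sqrt{2}}{2}-\sqrt{2}=1-\frac{1}{\sqrt{2}}=\frac{2-\sqrt{2}}{2},
\end{equation*}
in agreement with \eqref{limit-one} and Theorem \ref{limit-tn}. (The constant displayed in \eqref{limit-tm} should read $\tfrac{2-\sqrt{2}}{2}$; only this value is consistent with the earlier computations.)

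The only analytic point to watch is the interchange of limit and differentiation buried in the second limit above: a priori one knows only $\lim_m W_m(x)=(1-x)^{-1/2}$, whereas the corollary needs $\lim_m x\,W_m'(x)$ to equal $x\frac{d}{dx}(1-x)^{-1/2}$. This is exactly the content of the Proposition's second assertion, which I would simply quote; it is legitimate because the uniform convergence supplied there (via the Weierstrass $M$-test) permits termwise differentiation on compact subsets of $[0,1)$, and $x=\tfrac12$ lies in such a subset. Consequently there is no genuine obstacle at the level of this corollary: once the bookkeeping identity for $T(m)$ is in place, the result is an immediate specialization of the Proposition, and the substantive work has already been done in establishing the uniform limits for $W_m$ and $x\,W_m'$.
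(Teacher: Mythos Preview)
Your approach is essentially the same as the paper's: both express $T(m)$ in terms of $W_m$ and $W_m'$ at $x=\tfrac12$ and then invoke the Proposition's limits. In fact your write-up is more careful: the paper's displayed identity $T(m)=\lim_{x\to 1/2}\bigl(\tfrac12 W_m'(x)-W_m(x)\bigr)$ omits the $+1$ boundary correction you correctly track (the $r=0$ term contributes $-1$ to $xW_m'-W_m$), and you also catch the typo in \eqref{limit-tm}, where $\tfrac{2-\sqrt{2}}{\sqrt{2}}$ should be $\tfrac{2-\sqrt{2}}{2}$.
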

\begin{proof}
This follows from the identity 
\begin{equation}
T(m) = \lim\limits_{x \to 1/2} \frac{1}{2} \frac{d}{dx}
W_{m}(x) - W_{m}(x). 
\end{equation}
\end{proof}

\begin{note}
The function $W_{m}(x)$ can be expressed in hypergeometric form as
\begin{equation}
W_{m}(x) = \pFq21{\tfrac{1}{2}, - 1 - m }{-4m}{4x}.
\end{equation}
\end{note}

\section{The monotonicity of $T(m)$}
\label{sec-mono}

This last section describes the convergence of $T(m)$ to its limit 
given in \eqref{limit-tn}.

\begin{theorem}
The function $T(m)$ is monotone increasing for $ m \geq 2$.
\end{theorem}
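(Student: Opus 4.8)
The plan is to run the argument through the integral representation of Proposition \ref{integ-tn}. Write $T(m) = P(m)\,I(m)$, where
\begin{equation*}
P(m) = \frac{3(m+1)}{16(4m-1)}, \qquad I(m) = \int_0^2 t\,\pFq{2}{1}{\tfrac52,\,1-m}{2-4m}{t}\,dt.
\end{equation*}
A one-line computation gives $P(m)/P(m+1) = \lambda_m := \frac{(m+1)(4m+3)}{(m+2)(4m-1)} = 1 + \frac{5}{4m^2+7m-2} > 1$, so $P(m)$ is strictly decreasing and the desired inequality $T(m+1) > T(m)$ is equivalent to the quantitative growth estimate $I(m+1) > \lambda_m\,I(m)$.

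First I would dispose of the easy, qualitative half. Expanding the integrand as $\sum_{k\ge0}\left(\tfrac52\right)_k\frac{h_k(m)}{k!}t^k$ with $h_k(m) = (1-m)_k/(2-4m)_k = \prod_{j=0}^{k-1}\frac{m-1-j}{4m-2-j}$, a factor-by-factor comparison yields
\begin{equation*}
\frac{h_k(m+1)}{h_k(m)} = \prod_{j=0}^{k-1}\frac{(m-j)(4m-2-j)}{(m-1-j)(4m+2-j)},
\end{equation*}
and each factor exceeds $1$ because $(m-j)(4m-2-j) - (m-1-j)(4m+2-j) = 3j+2 > 0$. Hence every coefficient $h_k(m)$ is strictly increasing in $m$; together with the extra positive term that appears when the summation range grows from $k\le m-1$ to $k\le m$, this shows the integrand is positive and strictly increasing in $m$ for each fixed $t\in(0,2]$, so at least $I(m+1) > I(m)$. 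The whole difficulty is thus quantitative: one must beat the factor $\lambda_m$, which is within $O(m^{-2})$ of $1$.

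The main obstacle is precisely this near-cancellation. Passing to the difference $T(m+1)-T(m) = \sum_{r=2}^{m+2}(r-1)\binom{2r}{r}2^{-r}\bigl(g_r(m+1)-g_r(m)\bigr)$ with $g_r(m)=\binom{m+1}{r}/\binom{4m}{r}$ (legitimate since $g_{m+2}(m)=0$), the explicit ratio
\begin{equation*}
\frac{g_r(m+1)}{g_r(m)} = \frac{m+2}{m+2-r}\prod_{s=1}^{4}\frac{4m+s-r}{4m+s}
\end{equation*}
shows that $g_r(m+1)-g_r(m)$ is negative for the small indices $r=2,3$ and positive for $r\ge4$, with all of these differences of size $O(m^{-2})$; indeed $g_r(m+1)-g_r(m) = g_r(m)\cdot\frac{r(3r-11)}{8m^2}\,(1+o(1))$. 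Consequently $T(m+1)-T(m)$ is itself $O(m^{-2})$, and its sign is governed to leading order by the convergent series $\sum_{r\ge2}(r-1)\binom{2r}{r}8^{-r}\,r(3r-11)$, which I would evaluate using $\sum_r\binom{2r}{r}x^r = (1-4x)^{-1/2}$ and the Euler operator $x\,\frac{d}{dx}$ at $x=\tfrac18$, and verify is strictly positive.

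The hard part is converting this leading-order positivity into an inequality valid for every $m\ge2$: one must bound the $O(m^{-3})$ remainder uniformly, using the rational form of the ratio above together with the pointwise estimate $\left|\pFq{2}{1}{\tfrac52,1-m}{2-4m}{t}\right|\le 9\sqrt{3}\,(3-t)^{-5/2}$ of Proposition \ref{bound-integ-tn}, so as to produce a threshold $m_0$ beyond which positivity is guaranteed, and then to check the finitely many remaining cases $2\le m<m_0$ directly (recall $T(1)=T(2)=\tfrac14$, $T(3)=\tfrac{67}{264}$, so the first increments are safely positive). I expect the crux to be taming this order-$m^{-2}$ cancellation, and the cleanest resolution would be a sign-definite reorganization of the $O(m^{-2})$ terms — pairing each negative contribution ($r=2,3$) against a block of the larger-$r$ positive terms whose geometrically growing weights $(r-1)\binom{2r}{r}2^{-r}$ dominate — rather than a brute-force asymptotic estimate.
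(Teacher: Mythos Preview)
Your proposal is an outline rather than a proof. You set up the problem, establish the easy qualitative inequality $I(m+1)>I(m)$, and correctly compute the leading asymptotic $g_r(m+1)-g_r(m)\sim g_r(m)\cdot r(3r-11)/(8m^{2})$, but the decisive step --- beating the factor $\lambda_m$ --- remains a plan. You neither evaluate the series $\sum_{r\ge 2}(r-1)\,r(3r-11)\binom{2r}{r}8^{-r}$, nor bound the $O(m^{-3})$ remainder, nor determine the threshold $m_{0}$; and the ``sign-definite reorganization'' offered as the cleanest resolution is only described, not executed. There is also a gap you do not address: your expansion is for $r$ fixed as $m\to\infty$, while the sum runs up to $r=m+1$, so a uniform-in-$r$ estimate (or a separate tail bound) would still be needed before the leading-order sign can be promoted to an inequality valid for every finite $m\ge 2$.

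The paper's argument is entirely different and sidesteps all of this. Using the WZ method it produces an explicit second-order recurrence $a(n)T(n)-b(n)T(n+1)+c(n)T(n+2)+d(n)=0$ with polynomial coefficients, and the identity $b=a+c+d$ rewrites it as
\[
a(n)\bigl(T(n)-T(n+1)\bigr)+d(n)\bigl(1-T(n+1)\bigr)=c(n)\bigl(T(n+1)-T(n+2)\bigr).
\]
Since $T<1$ and $d(n)\ge 0$ for $n\ge 2$, a first drop $T(N)>T(N+1)$ would force $T(N+p)-T(N+p+1)\ge \delta_{N}\prod_{i<p}a(N+i)/c(N+i)\to\infty$ (because $a(n)/c(n)\to 27/16$), contradicting the convergence of $T(m)$ established in Section~\ref{sec-limiting}. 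Thus the paper converts the already-proved facts $T<1$ and $\lim T(m)$ exists into monotonicity via a recurrence, turning the $O(m^{-2})$ cancellation you isolate into a one-line contradiction rather than a quantitative remainder estimate.
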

\begin{proof}
Let
\begin{equation}
  F(r,m) =  \binom{2 r}{r} \binom{m + 1}{r} \frac{r - 1}{2^r
     \binom{4 m}{r}}.
\end{equation}
\noindent
The proof is based on a recurrence involving $F(r,m)$ that is obtained 
by the WZ-technology as developed in \cite{petkovsek-1996a}. Input the 
hypergeometric function $F(k,m)$ into WZ-package with summing range
from $r = 2$ to $r = n + 1$. The recurrence relations that come as the 
ouput is 
\begin{equation}
 a(n)T(n) - b(n)T(n + 1) + c(n)T(n + 2) + d(n) = 0,
\label{recu-WZ}
\end{equation}
  where
  \[ \begin{array}{ll}
       a ( n) = & 7195230 + 87693273 n + 448856568 n^2 + 1263033897 n^3 +
       2147597568 n^4  \\
& + 2279791176 n^5
 + 1502157312 n^6 + 586779648 n^7 + 121208832 n^8 + 9732096 n^9
     \end{array} \]
  \[ \begin{array}{ll}
       b ( n) = & 9661680 + 123557904 n + 651005760 n^2 + 1865031680 n^3 +
       3206772480 n^4 \\
 & + 3428727552 n^5
        + 2272235520 n^6 + 894167040 n^7 + 187269120 n^8 + 15499264 n^9
     \end{array} \]
  \[ \begin{array}{ll}
       c ( n) = & 3265920 + 41472576 n + 217055232 n^2 + 618806528 n^3 +
       1062162432 n^4 \\
  & +  1139030016 n^5
       + 762052608 n^6 + 305528832 n^7 + 66060288 n^8 + 5767168 n^9
     \end{array} \]
  \[ \begin{array}{ll}
       d(n) = & - 799470 - 5607945 n - 14906040 n^2 - 16808745 n^3 - 2987520
       n^4  
      + 9906360 n^5  \\
    &     + 8025600 n^6 + 1858560 n^7.
     \end{array} \]
  
  Note that $b(n) = a(n) + c(n) + d(n)$, then \eqref{recu-WZ}  becomes
\begin{equation}
  a(n) T(n) - (a(n) + c(n) + d(n)) T(n + 1) + c(n) T( n + 2) + d(n) = 0,
\end{equation}
\noindent
which is written as 
\begin{equation}
 a(n) ( T(n) - T(n + 1)) + d(n) ( 1 - T(n + 1)) = c(n) ( T(n+1) - T(n+2)).
\label{main-recu}
\end{equation}

\begin{lemma}
\label{bound-d}
The polynomial $d(m)$ is nonnegative for $m \geq 2$. 
\end{lemma}
\begin{proof}
Simply observe that 
\begin{eqnarray*}
d(x+2)  & = & 814627800 + 2803521195x + 3780146130x^{2} + 2680435095x^{3}  \\
& & 1098008880x^{4} + 262332600x^{5} + 34045440x^{6} + 1858560x^{7}
\end{eqnarray*}
\noindent
is a polynomial with positive coefficients.
\end{proof}

Theorem \ref{bound-tm} shows that $T(m) < 1$ and with this Lemma \ref{bound-d}
implies 
\begin{equation}
 a(n) ( T(n) - T(n + 1)) \leq  c(n) ( T(n+1) - T(n+2)).
\label{main-recu1}
\end{equation}
\noindent
Assume $T$ is not monotone. Define $N$ as the smallest positive 
integer such that 
\begin{equation}
T(N) > T(N+1). 
\end{equation}
\noindent
Then \eqref{main-recu1} implies 
\begin{equation}
 a(N) ( T(N) - T(N + 1)) \leq  c(N) ( T(N+1) - T(N+2))
\label{main-recu2}
\end{equation}
\noindent
and since $a(N) > 0, \, c(N) > 0$, it follows 
that $T(N+1) > T(N+2)$. Iteration of this argument shows that 
the sequence $\{ T(n): \, n \geq N \}$ is monotonically decreasing. 

\smallskip

Let $\delta_{N} = T(N) - T(N+1) > 0$, then \eqref{main-recu2} yields 
\begin{equation}
T(N+1) - T(N+2) \geq \frac{a(N)}{c(N)} \delta_{N}.
\label{main-recu3}
\end{equation}
\noindent
Iterating this procedure gives 
\begin{equation}
T(N + p) - T(N + p + 1) > \delta_N 
\prod_{i=0}^{p-1} \frac{a(N+i)}{c(N+i)}, \text{ for every } p \in \mathbb{N}.
\end{equation}
\noindent 
This inequality is now impossible as $p \to \infty$, since the left-hand side 
converges to $0$ in view of \eqref{limit-tn} and 
\begin{equation}
\lim\limits_{n \to \infty} \frac{a_{n}}{c_{n}} = \frac{27}{16}
\end{equation}
\noindent
showing that the right-hand side blows up. 
\end{proof}

\section{A conjectured inequality for hypergeometric functions}
\label{sec-conj}

The hypergeometric representation for the function $T(m)$ and the 
monotonicity of $T(m)$ give using \eqref{formula-last},
\begin{multline}
\pFq21{\tfrac{3}{2}, -m-2}{-4m-4}{2} -
\pFq21{\tfrac{3}{2}, -m-1}{-4m}{2} >  \\
3 \left[ 
\pFq21{\tfrac{1}{2}, -m-2}{-4m-4}{2}  -
\pFq21{\tfrac{1}{2}, -m-1}{-4m}{2}  
\right]. \nonumber
\end{multline}

\noindent
This is the special case $x = \tfrac{1}{2}$ of the conjecture given below.

\begin{conjecture}
The inequality 
\begin{multline*}
\pFq21{\tfrac{3}{2}, -m-2}{-4m-4}{4x} -
\pFq21{\tfrac{3}{2}, -m-1}{-4m}{4x} >  \\
3 \left[ 
\pFq21{\tfrac{1}{2}, -m-2}{-4m-4}{4x}  -
\pFq21{\tfrac{1}{2}, -m-1}{-4m}{4x}  
\right] 
\end{multline*}
\noindent 
holds for $x \geq \tfrac{1}{2}$. 
\end{conjecture}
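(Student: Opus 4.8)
The first move is a reduction. Writing $V_M(x)=\pFq21{\tfrac32,-M-1}{-4M}{4x}$ and $W_M(x)=\pFq21{\tfrac12,-M-1}{-4M}{4x}$, and noting $-m-2=-(m+1)-1$, $-4m-4=-4(m+1)$, the conjectured inequality is exactly
\[
V_{m+1}(x)-3W_{m+1}(x) > V_m(x)-3W_m(x).
\]
Here the identity $\left(\tfrac32\right)_r=(2r+1)\left(\tfrac12\right)_r$ is decisive: it gives $3\left(\tfrac12\right)_r-\left(\tfrac32\right)_r=2(1-r)\left(\tfrac12\right)_r$, so that the $r=0$ and $r=1$ contributions to $3W_M-V_M$ are $2$ and $0$, while all higher terms carry the factor $(1-r)$. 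Combined with $\binom{2r}{r}\binom{M+1}{r}\binom{4M}{r}^{-1}=4^r\left(\tfrac12\right)_r(-M-1)_r/\big((-4M)_r\,r!\big)$ this collapses to
\[
3W_M(x)-V_M(x)=2-2\,T(M,x),\qquad
T(M,x):=\sum_{r=2}^{M+1}(r-1)\binom{2r}{r}\binom{M+1}{r}\binom{4M}{r}^{-1}x^{r}.
\]
Since $T(M,\tfrac12)=T(M)$, the conjecture is equivalent to the statement that $T(m,x)$ is strictly increasing in $m$ for each fixed $x\ge\tfrac12$, the exact generalization of the monotonicity proved in Section \ref{sec-mono}. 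I would take this as the object to attack.

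Before invoking the machinery of Section \ref{sec-mono}, I would compute the limit, since that argument needs an a priori bound with the correct constant. For $\tfrac12\le x<1$ the Tannery / dominated-convergence computation of Section \ref{sec-limiting} applies with $\binom{2r}{r}\binom{M+1}{r}\binom{4M}{r}^{-1}\to\binom{2r}{r}4^{-r}$, and yields
\[
\lim_{m\to\infty}T(m,x)=L(x):=1-(1-x)^{-1/2}+\tfrac{x}{2}(1-x)^{-3/2},
\]
which reduces to $\tfrac{2-\sqrt2}{2}$ at $x=\tfrac12$. For $x\ge1$ the limiting series diverges, so $T(m,x)\to+\infty$; this dichotomy at $x=1$ will almost certainly force the two ranges to be handled by different arguments.

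With the reformulation in hand the plan is to mirror Section \ref{sec-mono}. Feeding the summand $(r-1)\binom{2r}{r}\binom{m+1}{r}\binom{4m}{r}^{-1}x^{r}$ into creative telescoping, now carrying $x$ as a free parameter, should produce a three-term recurrence
\[
a(m,x)T(m,x)-b(m,x)T(m+1,x)+c(m,x)T(m+2,x)+d(m,x)=0
\]
with coefficients polynomial in $m$ and $x$. I would then look for the structural relation that drove the earlier proof — the analog of $b(n)=a(n)+c(n)+d(n)$ — rewrite the recurrence in the form $a\big(T(m)-T(m{+}1)\big)+(\text{weight})\big(L(x)-T(m{+}1)\big)=c\big(T(m{+}1)-T(m{+}2)\big)$, and rerun the contradiction argument: a first descent $T(N,x)>T(N+1,x)$ would propagate to an eventual monotone descent whose successive gaps grow like $\prod_i a(N+i,x)/c(N+i,x)$, contradicting convergence to $L(x)$ provided $\lim_m a(m,x)/c(m,x)>1$.

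The hard part is ensuring that the two ingredients of that argument survive the extra parameter. First, the exact polynomial identity among the coefficients must persist, and the inhomogeneous term must keep a definite sign after the constant $1$ is replaced by $L(x)$; the $x$-dependent coefficients are under no obligation to cooperate. Second, and more fundamentally, the scheme requires the a priori bound $T(m,x)<L(x)$ for all $m$ — the role played by Theorem \ref{bound-tm} in the case $x=\tfrac12$ — and this must be established uniformly for $x\ge\tfrac12$, with a growing majorant replacing $L(x)$ once $x\ge1$. Producing such a uniform bound, presumably via an integral representation for $T(m,x)$ in the style of Proposition \ref{integ-tn} together with a monotone majorant for the associated integrand, is where I expect the decisive obstacle to lie.
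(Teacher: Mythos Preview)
The statement you are attempting is a \emph{conjecture} in the paper; there is no proof in the paper to compare against. The paper only observes that the case $x=\tfrac12$ follows from the monotonicity of $T(m)$ established in Section~\ref{sec-mono}, and then poses the general $x\ge\tfrac12$ inequality as an open problem. Your reduction is correct and is exactly the route by which the paper arrives at the conjecture: using $\left(\tfrac32\right)_r=(2r+1)\left(\tfrac12\right)_r$ one gets $3W_M(x)-V_M(x)=2-2\,T(M,x)$, so the conjecture is equivalent to strict monotonicity of $T(m,x)$ in $m$ for each fixed $x\ge\tfrac12$. What you have written is therefore a reasonable plan of attack, not a proof, and you are honest about that.

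One point where your plan drifts from what actually happened at $x=\tfrac12$: you anticipate rewriting the parametrized recurrence as
\[
a\bigl(T(m,x)-T(m+1,x)\bigr)+(\text{weight})\bigl(L(x)-T(m+1,x)\bigr)=c\bigl(T(m+1,x)-T(m+2,x)\bigr),
\]
with the limiting value $L(x)$ playing the role of the constant. But in the paper's argument the constant that appears is $1$, coming purely from the algebraic coincidence $b(n)=a(n)+c(n)+d(n)$, and \emph{not} the limit $L(\tfrac12)=\tfrac{2-\sqrt2}{2}\approx 0.293$. The limit is invoked only at the very last step, to contradict the divergence of $\prod a(N+i)/c(N+i)$. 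So the two ingredients you must secure are (i) whatever constant $C(x)$ makes $a(n,x)-b(n,x)+c(n,x)+d(n,x)\cdot C(x)^{-1}\text{-type}$ identity hold (if any), together with the sign of the resulting weight, and (ii) an a~priori bound $T(m,x)<C(x)$ analogous to Theorem~\ref{bound-tm}. There is no reason to expect $C(x)=L(x)$; indeed $C(\tfrac12)=1\neq L(\tfrac12)$. This misidentification does not invalidate your outline, but it does mean the required a~priori bound is not the one you name, and the obstacle you flag at the end is correspondingly different from what you describe.

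For $x\ge 1$ you correctly note that $T(m,x)\to\infty$, so the contradiction-by-convergence step of Section~\ref{sec-mono} is unavailable and a genuinely different argument is needed there; your proposal does not supply one. In short: the reformulation is right, the strategy for $\tfrac12\le x<1$ is the natural one, but the proposal remains a sketch with the same gaps that keep the statement a conjecture.
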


\noindent
\textbf{Acknowledgments}. The last author acknowledges the partial 
support of NSF-DMS 1112656. The second author is a post-doctoral fellow 
and the third and fourth authors are graduate students, funded in part by the 
same grant.

\end{document}